\documentclass[11pt]{article}

\usepackage{fullpage}
\usepackage{amsthm}
\usepackage{amssymb}
\usepackage{amsmath}
\usepackage{graphicx}
\usepackage{tikz}
\usepackage{mathrsfs}
\usepackage{float}
\usepackage{makecell}
\usepackage{amscd}
\usepackage{multirow}
\usepackage{cite}
\usepackage{verbatim}
\usepackage{hhline}
\usepackage{comment}

\newcommand{\E}{\mathbb{E}}

\newcommand{\N}{\mathbb{N}}
\newcommand{\R}{\mathbb{R}}

\renewcommand{\epsilon}{\varepsilon}

\DeclareMathOperator{\alphavector}{\overline{\alpha}}
\usepackage{relsize}
\newtheorem{theorem}{Theorem}
\newtheorem{lemma}[theorem]{Lemma}

\newtheorem{corollary}[theorem]{Corollary}
\newtheorem{claim}[theorem]{Claim}
\theoremstyle{definition}

\newtheorem{definition}[theorem]{Definition}

\title{Linear embeddings of random complexes}
\author{Andrew Newman\thanks{Carnegie Mellon University}}

\begin{document}
\maketitle
\begin{abstract}
For $X \sim X(n; 1, n^{-\alpha_1}, n^{-\alpha_2}, ...)$ in the multiparameter random simplicial complex model we establish necessary and sufficient strict inequalities on the $\alpha_i$'s to linearly embed the complex into $\R^{2d}$.
\end{abstract}
\section{Introduction}
Recall the multiparameter random simplicial complex model introduced by Costa and Farber \cite{CostaFarberMultiparameter}. Given $n \in \N$ and a sequence of probabilities $\textbf{p} = (p_i)_{i \geq 0}$ we sample $X \sim X(n; \textbf{p})$ starting with the ground set $[n]$. We first include each element of the ground set as a vertex independently with probability $p_0$, from here we include each edge between exisiting vertices with probability $p_1$, and then each triangle whose boundary is included with probability $p_2$, and so on. This model interpolates between the clique complex model, introduced first by Kahle \cite{KahleRandomClique}, which is the case where $p_1$ can vary but all other $p_i$'s are 1 and the Linial--Meshulam--Wallach $d$-dimensional model \cite{LM, MW} where $p_d$ can vary, $p_i = 1$ for $i < d$ and $p_i = 0$ for $i > d$ (note that the $d = 1$ case for Linial--Meshulam--Wallach is the Erd\H{o}s--R\'enyi random graph). 

Here we establish thresholds for linear embeddings of random simplicial complexes in Euclidean space. Recall that a \emph{linear map} from a simplicial complex $\Delta$ into Euclidean space $\R^d$ is a map $\phi: \Delta \rightarrow \R^d$ that is determined linearly by the image of vertices. That is, the image of any face $\sigma \in \Delta$ under $\phi$ is the convex hull in $\R^d$ of the image of the vertices of $\sigma$. We say that $\Delta$ embeds linearly in $\R^d$ if there is an injective linear map $\phi: \Delta \hookrightarrow \R^d$.

Perhaps the most well known situation concerning linear embeddings of simplicial complexes is graph planarity. A graph $G$ is planar if and only if it admits a linear embedding into $\R^2$. We could take this as a definition of planar graphs, but it turns out that when it comes to planar graphs it doesn't matter if we characterize them by existence of linear embeddings or existence of \emph{topological embeddings}. The result that a graph is planar if and only if it can be drawn in the plane with all edges as straight-line segments is known as F\'ary's theorem. The analogous statement for higher dimensions was first demonstrated to be false by Brehm \cite{BrehmMobiusStrip} who gave a triangulation of the M\"obius strip on nine vertices that does not linearly embed in $\mathbb{R}^3$. Results and discussion in \cite{FHSS} examine certain cases of equivalence or nonequivalence of topological and linear embeddings.

In between linear embeddings and topological embeddings are PL-embeddings (piecewise-linear embeddings). A simplicial complex is PL-embeddable in $\R^d$ if it admits a subdivision that is linearly embeddable in $\R^d$. It is known that PL-emdeddability is strictly weaker than linear embeddability in general. Brehm and Sarkaria \cite{BrehmSarkaria} disproved a conjecture of Gr\"unbaum that PL-embeddability of a $d$-complex in $2d$-dimensional space implies linear embeddability. A result of Matou\v{s}ek, Tancer, and Wagner \cite{MTW} establishing complexity results on PL-embeddings of simplicial complexes gives another proof that PL-embeddability does not imply linear embeddability by demonstrating that the two decision problems are not in the same complexity class.  Here as a corollary to the main result we give a proof of this same nonequivalence via the probabilistic method by establishing threshold results for linear embeddability of random complexes and contrasting them with known threshold results for PL-embeddability, see the concluding remarks for a precise statement on this.

 We consider multiparameter random complexes in a sparse regime where each $p_i = n^{-\alpha_i}$ for some fixed $\alpha_i \in [0, \infty]$ (with $\alpha_i = \infty$ corresponding to $p_i = 0$).  Furthermore, we assume that $p_0 = 1$. This is a convenience for the proofs, but by a straightforward but perhaps somewhat tedious application of large deviation inequalities on the number of vertices the natural extension of the main result also holds when $p_0$ is allowed to depend on $n$ provided that $p_0 n \rightarrow \infty$ as $n \rightarrow \infty$.

To state our main result we first introduce some notation. For $\alphavector = (\alpha_1, \alpha_2, ...)$ we denote $X(n; 1, n^{-\alpha_1}, n^{-\alpha_2}...)$ by $X(n; n^{-\alphavector})$. For each $s \geq 1$ we let $v_s$ denote the vector of binomial coefficients  ${v}_s = \left( \binom{s}{i + 1} \right)_{i \geq 1}$. By linearity of expectation the expected number of $(s - 1)$-simplices in $X(n, n^{-\alphavector})$ is 
\[\Theta(n^{s - \alphavector \cdot v_s}).\]
Our main result establishes a threshold in $\alphavector$ for embeddability in even dimensions.
\begin{theorem}\label{maintheorem}
Fix $d \geq 1$ we have the following for random simplicial complexes in the multiparameter model $X \sim X(n; n^{-\alphavector})$. If $d + 1 - \alphavector \cdot v_{d + 1} < 1$ then with high probability $X$ linearly embeds in $\R^{2d}$ while if $d + 1 - \alphavector \cdot v_{d + 1} > 1$ with high probability $X$ does not linearly embed in $\R^{2d}$. 
\end{theorem}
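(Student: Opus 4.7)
Write $\mu := d + 1 - \alphavector \cdot v_{d+1}$ for the exponent of the expected number of $d$-simplices in $X$; the threshold $\mu = 1$ corresponds to $f_d(X) = \Theta(n)$. I would handle the two directions separately.

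For the embedding direction ($\mu < 1$), a first-moment estimate gives $f_d(X) = o(n)$ with high probability. I would then push this further using higher-moment computations on "dense" sub-configurations (pairs of $d$-simplices sharing more than one vertex, short cycles in the hyperedge-intersection sense, etc.) to show that with high probability the $d$-simplex hypergraph of $X$ decomposes into small, sparse pieces. Given such structure I would construct the linear embedding inductively: place a generic system of vertex positions in $\R^{2d}$, then add the $d$-simplices piece by piece, using the $2d$ ambient dimensions to route each new $d$-simplex clear of those previously placed. Vertices not incident to any $d$-simplex can be placed last in general position, and the $(d-1)$-skeleton poses no extra obstruction since any $(d-1)$-complex embeds linearly in $\R^{2d-1} \subset \R^{2d}$ via e.g.\ a moment-curve placement.

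For the non-embedding direction ($\mu > 1$), the difficulty is that a pure face-counting argument cannot suffice: the $d$-skeleton of a simplicial $(2d)$-polytope can carry $\Theta(n^d)$ linearly embedded $d$-faces in $\R^{2d}$. So the proof must exploit the random structure of $X$ — the $d$-faces of $X$ are not the face system of any convex polytope. I would look for a "minimal linear obstruction" that appears with high probability once $f_d(X) = \omega(n)$. One natural angle is via Radon partitions: any linear placement $\phi$ determines a sign for each pair of disjoint $d$-simplices in $X$, and the existence of a linear embedding requires these signs to vanish simultaneously; I would hunt for a short cyclic or bipartite configuration of $d$-simplices in $X$ whose Radon signs are provably incompatible under any vertex placement.

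The hard step is clearly the non-embedding direction. Locating the correct combinatorial "linear obstruction" — one that is strictly stronger than the topological or PL Van Kampen obstruction, but still present with high probability the moment $f_d$ exceeds $\Theta(n)$ — is the main conceptual obstacle. It is also precisely what is needed to separate linear from PL-embeddability, as anticipated by the introduction.
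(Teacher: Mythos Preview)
Your sparse-side sketch is essentially the paper's argument. The precise form of ``small, sparse pieces'' is that the $(d+1)$-uniform hypergraph of $d$-faces has no $2$-core (every nonempty subhypergraph has a vertex in at most one hyperedge); the paper proves this via separate first-moment arguments ruling out large weakly-connected components and ruling out small dense pieces, and then builds the linear embedding inductively exactly as you describe, adding one $d$-simplex at a time through its free vertex.

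The dense side has a genuine gap. You correctly identify Radon partitions as the relevant tool, but you then propose to hunt for a single fixed ``short cyclic or bipartite configuration'' of $d$-simplices that is unembeddable under \emph{every} vertex placement. The paper does not do this, and it is far from clear that such a small universal linear obstruction appears w.h.p.\ right at $\mu = 1 + \epsilon$. Instead the paper reverses the quantifiers: for each \emph{fixed} generic placement $\pi$ of $[n]$ in $\R^{2d}$, it counts evenly-split Radon matches (a quantitative van Kampen--Flores theorem of Bukh--Frick--Zerbib guarantees that $\Omega(n^{2d+2})$ of the $(2d+2)$-subsets have an evenly-split Radon partition) and applies Janson's inequality to get $\Pr(\text{no Radon match under } \pi) \le \exp(-n^{1+\epsilon'})$. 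The decisive ingredient you are missing is the Goodman--Pollack bound: there are only $n^{O_d(n)}$ order types of $n$ labeled points in $\R^{2d}$, hence only $n^{O_d(n)}$ combinatorially distinct placements to worry about. A union bound of $n^{O_d(n)} \cdot \exp(-n^{1+\epsilon'}) = o(1)$ then finishes. So the obstruction is not a single subcomplex of $X$; rather, \emph{every} order type has its own Radon match somewhere in $X$, and the order-type count is small enough that this can be verified simultaneously for all of them.
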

Stated another way, our main result says if the $\alphavector$ is selected so that the expected number of $d$-faces is at most $n^{1 - \epsilon}$ for some $\epsilon > 0$ then the random complex will embed in $\R^{2d}$ with high probability, while an average number of $d$-faces of order at least $n^{1 + \epsilon}$ implies that the random complex will not embed in $\R^{2d}$. Somewhat surprisingly then the threshold to embed a random complex in $\R^{2d}$ turns out to only depends on the $d$-skeleton of the random complex. 

\section{Preliminaries}
Recall that a $d$-complex $\Delta$ is alway embeddable in $\R^{2d + 1}$. A generic set of points in $\R^{m}$ is one where for any $k$ there is no affine subspace of $\R^m$ containing more than $k + 1$ of the points and for any two disjoint subsets $A$ and $B$ of the point set each of size at most $m$, the affine subspace of dimension $|A| - 1$ spanned by the points in $A$ and the affine subspace of dimension $|B| - 1$ spanned by the points in $B$ intersect as an affine subspace of dimension $|A| + |B| - 2 - m$, if this quantity is nonnegative and don't intersect at all otherwise. If we place the points of a $d$-complex $\Delta$ in $\R^{2d + 1}$ generically then it will be an embedding as two $d$-dimensional subspaces generically don't intersect in $\R^{2d + 1}$.

Observe that any set of points in $\R^{m}$ are moved to generic position by an arbitrarily small perturbation. The proof of the sparse side of Theorem \ref{maintheorem} will give a generic embedding of our random complex in $\R^{2d}$, and we will rule out the existence of a generic embedding on the dense side of the claimed threshold. If there is no generic embedding of our finite simplicial complex there is no embedding at all as a small perturbation of an embedding will still be an embedding.

The proof for the sparse side will follow a collapsibility argument. We will prove a linear embedding analogue of a result of Horvati\'c from \cite{Horvatic} about PL-embeddability. A face of a simplicial complex is said to be \emph{free} if it is properly contained in only one other face. The removal of a free face and the face that properly contains it is called an elementary collapse, and a complex is $d$-collapsible if there is a sequence of elementary collapse on the complex that leave behind only faces of dimension at most $d - 1$. Theorem 3.2 of \cite{Horvatic} implies that a $d$-complex which is $d$-collapsible PL embeds in $\R^{2d}$. We'll prove on the sparse side that if a stronger collapsibility property holds for a $d$-complex it will linearly embed in $\R^{2d}$ and then show that with high probability our random complex satisfies that strong collapsibility property.

For the dense side we make use of Radon's Theorem. Recall that Radon's Theorem states that for any set of $d + 2$ points in $\R^d$ there is a partition into two sets $A$ and $B$ so that the convex hull of $A$ intersects the convex hull of $B$. For the dense side we start with a generic embedding of the points into $\R^{2d}$ and then generate the complex and count how many sets of $2d + 2$ points, in expectation, have a Radon partition $(A, B)$ with the simplex on $A$ and the simplex on $B$ both included in the complex. We show that the expected number of such point sets is large and then prove a concentration of measure inequality that survives multiplication by the number of combinatorially distinct ways to place the $n$ points in $\R^{2d}$ (the number of order types).

Both for the proof of the sparse side and later for the proof of the concentration inequality on the dense side, we need the following lemma that gives basic facts about the $f$-vector of a random complex in the multiparameter model.

\begin{lemma}\label{fvectortheorem}
For any $s \geq 2$ if $s - \alphavector \cdot v_s < 1$ then $s + 1 - \alphavector \cdot v_{s + 1} < 0$, and for $s \geq 3$ if $s - \alphavector \cdot v_s > 1$ then $s - 1 - \alphavector \cdot v_{s - 1} > 1$.
\end{lemma}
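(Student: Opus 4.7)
The plan is to prove both parts by term-by-term comparison of $\alphavector\cdot v_t$ for adjacent values of $t$, using the fact that all $\alpha_i\ge 0$ and that each entry of $v_t$ is a binomial coefficient whose ratio to the corresponding entry of the neighboring $v_{t\pm 1}$ is easy to bound. The key identity is
\[
\frac{\binom{s+1}{i+1}}{\binom{s}{i+1}}=\frac{s+1}{s-i},\qquad \frac{\binom{s-1}{i+1}}{\binom{s}{i+1}}=\frac{s-i-1}{s}.
\]

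For the first statement, I would rewrite the hypothesis $s-\alphavector\cdot v_s<1$ as $\alphavector\cdot v_s>s-1$. Among the nonzero entries of $v_s$, i.e.\ for $i$ with $1\le i\le s-1$, the ratio $(s+1)/(s-i)$ is minimized at $i=1$, giving $(s+1)/(s-1)$. Multiplying each term $\alpha_i\binom{s}{i+1}$ in $\alphavector\cdot v_s$ by its corresponding (larger) factor yields
\[
\alphavector\cdot v_{s+1}\;\ge\;\tfrac{s+1}{s-1}\,\alphavector\cdot v_s\;>\;\tfrac{s+1}{s-1}(s-1)\;=\;s+1,
\]
which is exactly $s+1-\alphavector\cdot v_{s+1}<0$. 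The case $s=2$ works because the bound $(s+1)/(s-1)=3$ is an equality on the only nonzero entry $i=1$, and strict inequality is preserved by the hypothesis.

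For the second statement, rewrite $s-\alphavector\cdot v_s>1$ as $\alphavector\cdot v_s<s-1$. For $1\le i\le s-2$ the ratio $(s-i-1)/s$ is maximized at $i=1$, giving $(s-2)/s$, while the extra term $\alpha_{s-1}\binom{s}{s}$ present in $\alphavector\cdot v_s$ but absent in $\alphavector\cdot v_{s-1}$ only helps the direction of the inequality. Thus
\[
\alphavector\cdot v_{s-1}\;\le\;\tfrac{s-2}{s}\,\alphavector\cdot v_s\;<\;\tfrac{s-2}{s}(s-1)\;<\;s-2,
\]
where the last step uses $(s-1)/s<1$, valid for $s\ge 3$; rearranging gives $s-1-\alphavector\cdot v_{s-1}>1$.

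I do not anticipate a serious obstacle; the only point to be careful about is the boundary index $i=s-1$ in the second part, where $\binom{s-1}{s}=0$ but $\binom{s}{s}=1$, so the ``ratio'' is not finite. Noting that this term has the correct sign when moving from $\alphavector\cdot v_s$ down to $\alphavector\cdot v_{s-1}$ lets us simply discard it. The hypothesis $s\ge 3$ in the second part is exactly what is needed for both $s-2>0$ (so we can divide the chain of inequalities above) and for there to be any $i$ with $1\le i\le s-2$ to compare in the first place.
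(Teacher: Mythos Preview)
Your argument is correct and is a genuinely different, more direct route than the paper's. The paper treats each half of the lemma as a linear program: it fixes $s-\alphavector\cdot v_s=1\mp\epsilon$ and then optimizes the target expression over the simplex $\{\alphavector\ge 0:\alphavector\cdot v_s=s-1\pm\epsilon\}$, invoking the simplex method to reduce to the extreme points where exactly one $\alpha_{j-1}$ is nonzero, and checks those cases by hand. Your approach instead bounds $\alphavector\cdot v_{s\pm 1}$ termwise against $\alphavector\cdot v_s$ using the closed-form ratios of adjacent binomial coefficients, which immediately gives the multiplicative factors $(s+1)/(s-1)$ and $(s-2)/s$.

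What each buys: your argument is shorter and entirely elementary, with no appeal to linear programming. The paper's extreme-point reduction, on the other hand, is a reusable template; the same ``reduce to a single nonzero $\alpha_j$'' manoeuvre is invoked again later (in the large-component lemma) to bound another linear functional of $\alphavector$, so the investment is partially recouped. One small omission worth making explicit in your first part: the term $\alpha_s\binom{s+1}{s+1}$ is present in $\alphavector\cdot v_{s+1}$ but has no counterpart in $\alphavector\cdot v_s$; as in your treatment of the boundary index in the second part, this extra nonnegative term only strengthens the inequality, so the step $\alphavector\cdot v_{s+1}\ge\frac{s+1}{s-1}\alphavector\cdot v_s$ goes through.
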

\begin{proof}
Fix $\epsilon > 0$ and suppose that $s - \alphavector \cdot v_s = 1 -\epsilon$. We maximize $s + 1 - \alphavector \cdot v_{s + 1}$ subject to this constraint and the constraints that the $\alpha_i$'s are all nonnegative. Obviously if one of the $\alpha_i$'s is infinity so that $\overline{\alpha} \cdot v_{s + 1} = \infty$ there is nothing to prove, so we may assume all the relevant $\alpha_i$'s are finite. The region of the hyperplane determined by $s - \alphavector \cdot v_s =1 - \epsilon$ in the first orthant is a simplex on $s - 1$ vertices in $\R^{s - 1}$. By the simplex method we only have to check $s + 1 - \alphavector \cdot v_{s + 1}$ at the vertices of this simplex. Moreover we may assume that $\alpha_s = 0$. At the vertices of the feasible region exactly one of the $\alpha_i$'s is nonzero. Let $\alpha_{j - 1} \neq 0$ for some $j \in \{2, ..., s\}$ with the rest of the $\alpha_i$'s in this range being 0. So we have that $s - \alpha_{j - 1} \binom{s}{j} = 1 - \epsilon$.  Now
\begin{eqnarray*}
s + 1 - \alpha_{j - 1} \binom{s + 1}{j} &=& s + 1 - \alpha_{j  - 1} \left( \binom{s}{j} + \binom{s}{j - 1} \right) \\
&=& s - \alpha_{j - 1} \binom{s}{j} + 1 - \alpha_{j - 1} \binom{s}{j - 1} \\ 
&=& 1 - \epsilon + 1 - \alpha_{j - 1} \binom{s}{j - 1}  \\
&=& 1 - \epsilon + 1 - \alpha_{j - 1} \binom{s}{j} \left(\frac{\binom{s}{j - 1}}{\binom{s}{j}} \right) \\
&=& 2 - \epsilon - (s - 1 + \epsilon)  \frac{j}{s - j + 1} 
\end{eqnarray*}
Now we have to verify that this is negative, as $\frac{j}{s + 1 - j}$ is an increasing function and $j \geq 2$, we have
\begin{eqnarray*}
2 - \epsilon - (s - 1 + \epsilon)  \frac{j}{s - j + 1}  \leq 2 - \epsilon - (s - 1 + \epsilon) \frac{2}{s - 1}
\end{eqnarray*}
The righthand side is negative provided that
\[2 - \epsilon < 2 \left( \frac{s - 1 + \epsilon}{s - 1} \right)\]
And this holds clearly since the right hand side is bigger than 2.

For the other part, we minimize $s - 1 - \alphavector \cdot v_{s - 1}$ subject to $s - \alphavector \cdot v_{s} = 1 + \epsilon$. As before it suffices to check in the cases that a single $\alpha_{j - 1} \neq 0$ for some $j \in \{2, ..., s\}$ with the rest of the $\alpha_i$'s being zero and so $s - \alpha_{j - 1} \binom{s}{j} = 1 + \epsilon$. We have 
\begin{eqnarray*}
s - 1 - \alpha_{j - 1} \binom{s - 1}{j} &=& s - 1 - \alpha_{j  - 1} \left( \binom{s}{j} - \binom{s - 1}{j - 1} \right) \\
&=& s - 1 - \alpha_{j - 1} \binom{s}{j} + \alpha_{j - 1} \binom{s - 1}{j - 1} \\
&=&  - 1 + (1 + \epsilon) + \alpha_{j - 1} \binom{s}{j} \left( \frac{j}{s} \right) \\
&=& - 1 + (1 + \epsilon) + (s - 1 - \epsilon)\left(\frac{j}{s} \right)
\end{eqnarray*}
Again it suffices to check in the case that $j = 2$ as that's what makes final expression above the smallest. So we check
\[\epsilon + (s - 1 - \epsilon) \left( \frac{2}{s} \right ) > 1\]
However our assumption that $s - \alphavector \cdot v_{s} = 1 + \epsilon$ gives that $\epsilon$ is at most $s - 1$ and the above expression attains a minimum at $\epsilon = 0$ where we just have to check that 
\[2 \left(\frac{s - 1}{s}\right) > 1\]
and this holds for $s \geq 3$ as required.
\end{proof}

\section{Sparse side}
For the sparse side we will prove a certain collapsibility condition on our random complexes that implies the complex is embeddable. For $X$ a $d$-complex, we associate a $(d + 1)$-uniform hypergraph $\mathcal{H}$ whose hyperedges are the $d$-faces of $X$. If $\mathcal{H}$ has no 2-core then $X$ will embed in $\R^{2d}$. By 2-core we mean a subhypergraph of $\mathcal{H}$ in which every vertex is contained in at least two hyperedges. If $\mathcal{H}$ has no 2-core then there is a sequence of vertex deletions obtained by removing a vertex of degree at most 1, along with the hyperedge that contains it, that removes all the vertices. We will say the pure part of $X$ has no 2-core to mean that the associated hypergraph has no 2-core. Recall that the pure part of a $d$-complex is the subcomplex generated by the downward closure of the $d$-faces. The next theorem shows that the absences of a 2-core in dimension $d$ implies embeddability in dimension $2d$.
\begin{theorem}\label{CimpliesE}
For $d \geq 1$, if $X$ is a finite $d$-complex and the pure part of $X$ has no 2-core then $X$ linearly embeds into $\R^{2d}$.
\end{theorem}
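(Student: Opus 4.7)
The plan is to induct on the number $m$ of $d$-faces of $X$. The base case $m = 0$ is immediate: $\dim X \le d-1$, so any placement of vertices in generic position in $\R^{2d}$ gives a linear embedding, since two disjoint simplices of dimension at most $d-1$ have combined dimension at most $2d-2$ and in generic position must have disjoint images. For the inductive step, the no-2-core hypothesis guarantees a vertex $v$ of degree exactly $1$ in the hypergraph of $d$-faces of $X$ (if all positive-degree vertices had degree $\geq 2$, the nonempty subhypergraph on them would be its own 2-core). Let $\sigma$ be the unique $d$-face containing $v$, and let $X'$ be $X$ with $v$ and every face containing $v$ removed. Then $X'$ has $m-1$ $d$-faces and its pure part still has no 2-core, so by induction $X'$ embeds linearly in $\R^{2d}$; after a small perturbation we may assume the embedding $\phi'$ places the vertices of $X'$ in generic position.

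I would extend $\phi'$ to $\phi \colon X \to \R^{2d}$ by choosing any vertex $u \in \sigma \setminus \{v\}$, a generically chosen unit vector $e \in \R^{2d}$, and a sufficiently small $\epsilon > 0$, and setting $\phi(v) := \phi'(u) + \epsilon e$. Verifying that the extension is a linear embedding reduces to checking the intersection identity $\phi(\tau) \cap \phi(\tau') = \phi(\tau \cap \tau')$ for every pair of faces $\tau, \tau' \in X$. Pairs not involving $v$ are inherited from $\phi'$. For pairs $(\tau, \tau')$ with $v \in \tau$ and $v \notin \tau'$, parametrize $\phi(\tau) = \mathrm{conv}(\phi'(\tau \setminus \{v\}) \cup \{\phi(v)\})$ as $\{r + s e : r \in \phi'(\tau \setminus \{v\}),\ s \in [0,\epsilon]\}$; the $s = 0$ contribution to $\phi(\tau) \cap \phi(\tau')$ is exactly $\phi'(\tau \setminus \{v\}) \cap \phi'(\tau') = \phi(\tau \cap \tau')$ by the embedding property of $\phi'$. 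For the $s > 0$ contribution the set of bad positions for $\phi(v)$ has dimension at most $\dim \tau + \dim \tau'$, which is strictly less than $2d$ except in the top-dimensional case $\tau = \sigma$, $\tau' = \sigma_j$ (another $d$-face of $X$); in the non-top cases, a generic choice of $\phi(v)$ avoids the bad set.

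The main obstacle is precisely the top-dimensional case: two $d$-simplices in $\R^{2d}$ generically do meet in a point, so no open dense set of positions for $\phi(v)$ avoids extraneous intersections. The resolution exploits the concentration of $\phi(v)$ near $\phi'(u)$. An intersection point $q = r + s e \in \phi(\sigma) \cap \phi'(\sigma_j)$ with $s > 0$ forces $s e$ to lie in a certain affine subspace $H$ determined by the direction spaces of the affine hulls of $\phi'(\sigma \setminus \{v\})$ and $\phi'(\sigma_j)$, which by the genericity of $\phi'$ has dimension at most $2d - 1$. When $\sigma \cap \sigma_j \neq \emptyset$ the subspace $H$ passes through the origin, and a generic $e \notin H$ rules out any $s > 0$ solution entirely, so the intersection is exactly $\phi(\sigma \cap \sigma_j)$ as desired. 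When $\sigma \cap \sigma_j = \emptyset$, $H$ is a proper affine hyperplane at positive distance from the origin, and any $\epsilon$ smaller than that distance divided by $\|e\|$ eliminates all solutions with $s \in (0, \epsilon]$. Since only finitely many pairs $(\tau, \tau')$ need to be controlled, a single simultaneous choice of $e$ and $\epsilon$ suffices, completing the extension and hence the induction.
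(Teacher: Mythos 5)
Your proposal is correct and is a variant of the paper's argument. Both proofs are collapse‐plus‐perturbation inductions: the 2‐core hypothesis supplies an ordering, the complex is built up one step at a time, and at each step the free vertex of the new $d$-simplex $\sigma$ is placed so close to the already-embedded part that $\sigma$ cannot ``reach out'' to hit another $d$-simplex.

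The differences are organizational rather than conceptual, but they are worth noting. The paper inducts on the $d$-faces $f_1,\dots,f_k$ directly (reducing first to the pure part), placing the free vertex inside a thickened copy of the facet $\sigma$ within a generic affine $d$-plane $K$ through $\sigma$; the nonembeddability obstruction is then handled by observing $K$ meets the rest of the complex in finitely many points, all avoidable by making the thickening small. You instead induct on vertices: strip off a degree-$1$ vertex $v$ together with all faces containing it, invoke the induction hypothesis on $X'$, and place $\phi(v)$ at $\phi'(u)+\epsilon e$ for a generic direction $e$. Your case analysis on $\sigma\cap\sigma_j$ is more explicitly linear-algebraic: when $\sigma\cap\sigma_j\neq\emptyset$ the bad set $H=\mathrm{aff}(\phi'(\sigma_j))-\mathrm{aff}(\phi'(\sigma\setminus\{v\}))$ is a proper linear subspace (dimension at most $2d-1$) so a generic direction $e$ misses it; when $\sigma\cap\sigma_j=\emptyset$ general position of $\phi'$ forces $H$ to be an affine subspace avoiding the origin, so a small $\epsilon$ suffices. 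This cleanly isolates the two mechanisms (generic direction vs.\ small step length) that the paper bundles together as ``avoid finitely many points.'' Your version also handles the non-pure lower-dimensional faces inside the same induction rather than tacking them on at the end. One small imprecision: the parametrization $\phi(\tau)=\{r+se: r\in\phi'(\tau\setminus\{v\}),\ s\in[0,\epsilon]\}$ requires $u\in\tau$ (otherwise $r$ ranges over $\phi'((\tau\setminus\{v\})\cup\{u\})$), and for the non-top pairs it is cleaner to argue directly that a generic choice of $p=\phi(v)$ keeps the at most $2d+1$ points of $\phi(\tau\cup\tau')$ affinely independent, from which the face-intersection identity is automatic; but these are cosmetic, and the substance of your proof is sound.
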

\begin{proof}
If we can embed the pure part of $X$ into $\R^{2d}$ generically then this embedding can be extended to an embedding of the entire complex. Indeed if the vertices of $X$ are mapped into $\R^{2d}$ generically then the only faces of $X$ that might intersect one another in their interior are pairs of $d$-faces, each pair of which could possibly intersect at a single point. Therefore once we have placed the points into $\R^{2d}$ generically so that the pure part of $X$ embeds, the lower dimensional faces can be added without obstructing the embedding. 

Let $f_1, ..., f_k$ be an ordering of the $d$-faces of $X$ obtained by removing vertices of degree one (called \emph{free vertices}) with $f_k$ as the first $d$-face removed and $f_1$ as the last face removed. We show that if we have embedded $f_1 \cup \cdots \cup f_{\ell}$ then we may extend the embedding to include $f_{\ell + 1}$. Obviously $f_1$ as a $d$-simplex embeds into $\R^{2d}$. Now suppose we have embedded $f_1 \cup \cdots \cup f_{\ell}$. When $f_{\ell + 1}$ is added to $X$ it comes with a new vertex that belongs to $f_{\ell + 1}$ but not to any other $f_i$'s we've embedded so far by how the ordering is defined. Thus 
\[(f_1 \cup \cdots \cup f_{\ell}) \cap f_{\ell + 1} \]
contains at most $d$ vertices. If the intersection contains less than $d$ vertices then we place all but one of the new vertices of $f_{\ell + 1}$ in $\R^d$ generically with the rest of the embedding. Now we fill in the convex hull of these $d$ vertices of $f_{\ell + 1}$. By general position this will be compatible with the embedding as everything we are adding is of dimension at most $d - 1$. Now we want to add the last vertex and the simplex $f_{\ell + 1}$ attaching it along the $(d - 1)$-simplex on the first $d$-vertices. Call this $(d - 1)$-simplex $\sigma$. We take an arbitrary affine $d$-space through $\sigma$ in $\R^{2d}$, call this space $K$. Now $K$ will intersect the complex we have embedded so far only at $\sigma$ and at some finite number of points. Therefore there is some thickening of $\sigma$ within $K$ that avoids all the finitely many points of intersection with the rest of the complex. This can be obtained by thickening $\sigma$ by half the distance to the nearest of the finitely many points. Now put the new vertex $v$ inside this this thickening and take the convex of $v$ and $\sigma$ and we have extended the embedding to include $v \cup \sigma = f_{\ell + 1}$. Therefore inductively the pure part of $X$ can be embedded in $\R^{2d}$ generically and this enough to conclude that all of $X$ can be embedded in $\R^{2d}$.
\end{proof}

By Theorem \ref{CimpliesE} we want to prove a collapsibility result about $X \sim X(n; n^{-\alphavector})$. This is a different type of collapsibility than is studied in \cite{MalenCollapsibility} and \cite{NewmanOneSided}. Here we are collapsing (vertex, $d$-simplex)-pairs and these other papers deal with collapsing ($(d - 1)$-simplex, $d$-simplex)-pairs. The overall strategy though is the very similar: we prove that there are no large obstructions to collapsibility, and that local obstructions to collapsibility are too dense to appear in this sparse regime. We start with a definition.

\begin{definition}
For $X$ a $d$-complex, associate a graph $G(X)$ whose vertices are the $d$-simplices of $X$ with $\sigma$ adjacent to $\tau$ exactly when $\sigma \cap \tau \neq \emptyset$. We say that a subcomplex $Y$ of $X$ is \emph{weakly connected} if for any $\sigma$ and $\tau$ in $Y$ there is a path within $G(X)$ between $\sigma$ and $\tau$. The weakly connected components in $X$ are the maximal weakly connected subcomplexes.
\end{definition}

We could also refer to weak connectivity as path connectivity but we use weak connectivity to contrast with strong connectivity from the other papers in the literature on collapsibility. Clearly a minimal 2-core of $X$ is weakly connected. We first rule out large weakly connected components as that will rule out large minimal 2-cores.

\begin{lemma}\label{LargeCoreLemma}
Fix $d \geq 1$ and $\epsilon > 0$, for vector $\alphavector$ so that $d + 1 - \alphavector \cdot v_{d + 1} = 1 - \epsilon$ there is some constant $L = L(d, \epsilon)$ so that with high probability $X \sim X(n; n^{-\alphavector})$ contains no weakly connected subcomplex on more than $L$ vertices. 
\end{lemma}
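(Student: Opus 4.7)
The plan is to apply the first moment method: I will bound the expected number of weakly connected subcomplexes of $X$ on more than $L$ vertices and show it tends to zero as $n \to \infty$ for a suitable constant $L = L(d, \epsilon)$.

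Given a weakly connected subcomplex $Y \subseteq X$ on $m$ vertices, it contains a ``spanning'' sub-family of $k \geq \lceil (m-1)/d \rceil$ of its $d$-simplices whose union of vertex sets is all $m$ vertices and which remains weakly connected. I enumerate such spanning families by a build-up: order the $d$-simplices as $f_1, f_2, \ldots, f_k$ so that each $f_j$ (for $j \geq 2$) shares at least one vertex with $f_1 \cup \cdots \cup f_{j-1}$, and let $a_j \in \{1, \ldots, d\}$ be the number of new vertices introduced by $f_j$, so that $a_1 = d+1$ and $\sum_j a_j = m$. At step $j \geq 2$ the number of labeled choices for $f_j$ is at most $n^{a_j} m_{j-1}^{d+1-a_j}$, where $m_{j-1}$ is the vertex count through step $j-1$, and the probability that $f_j$ together with its new lower-dimensional subfaces lies in $X$ contributes a factor $n^{-\alphavector \cdot v_{d+1} + \alphavector \cdot v_{d+1-a_j}}$, using that the number of new $i$-faces introduced by $f_j$ is at least $\binom{d+1}{i+1} - \binom{d+1-a_j}{i+1}$.

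Setting $s_j = d+1-a_j$, a short algebraic manipulation shows each per-step expected contribution is bounded by $m_{j-1}^{s_j}\, n^{-(s_j - \alphavector \cdot v_{s_j} - 1) - \epsilon}$, while the initial face $f_1$ contributes $n^{1-\epsilon}$. The contrapositive of the first part of Lemma \ref{fvectortheorem}, iterated from the hypothesis $d+1 - \alphavector \cdot v_{d+1} = 1 - \epsilon \geq 0$, yields $s - \alphavector \cdot v_s \geq 1$ for every $s \in \{2, \ldots, d\}$, so the per-step factor is at most $m_{j-1}^{s_j} n^{-\epsilon}$ in all cases; the case $\epsilon > 1$ is trivial since $X$ then has no $d$-simplices with high probability. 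Bounding $m_{j-1} \leq (j-1)d+1$ and summing over the profiles $(a_2, \ldots, a_k)$ and over orderings, the expected number of spanning weakly connected structures on $m$ vertices with $k$ $d$-simplices is bounded (up to multiplicative constants depending only on $d$) by
\[
n^{1 - k\epsilon}\,(Ck)^{Ck}.
\]

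Summing over $k \geq (m-1)/d$ with $m > L$: for $k$ in a moderate range (say up to $n^{\epsilon/(2(d+1))}$) the polynomial-in-$k$ factor is dominated and the sum forms a geometric series in a ratio that is $o(1)$, giving total contribution $o(1)$ once $L$ is sufficiently large; for larger $k$ either the bound is already super-polynomially small or Markov's inequality applied to the expected number of $d$-simplices in $X$, which equals $\Theta(n^{1-\epsilon})$, forces $k = O(n^{1-\epsilon})$ with high probability, precluding any larger weakly connected structure. The hard part will be balancing the polynomial-in-$k$ counting blowup $(Ck)^{Ck}$ against the $n^{-k\epsilon}$ probability decay in the intermediate regime, which relies crucially on Lemma \ref{fvectortheorem} to secure the strict $n^{-\epsilon}$ factor at each step of the build-up.
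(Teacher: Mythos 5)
Your per-step analysis is sound and essentially parallels what the paper does internally. The build-up ordering $f_1, \ldots, f_k$ with each $f_j$ sharing a vertex with the union of its predecessors is exactly the spanning-tree-on-$G(Y)$ ordering in the paper, and your algebra showing that each step contributes a factor of $m_{j-1}^{s_j} n^{-(s_j - \alphavector\cdot v_{s_j} - 1) - \epsilon}$ is correct. Your use of the iterated contrapositive of the first part of Lemma~\ref{fvectortheorem} to get $s - \alphavector \cdot v_s \geq 1$ for $s \in \{2, \ldots, d\}$ (and the trivial $s=1$ case) is a clean way to secure the $n^{-\epsilon}$ per-step factor; the paper instead verifies an equivalent per-step decrease of its running exponent $\beta$ by a direct optimization and a separate binomial-coefficient claim, but the content is the same.

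The genuine gap is in how you close the first-moment argument. You try to sum your bound $n^{1-k\epsilon}(Ck)^{Ck}$ over \emph{all} $k \gtrsim L/d$, and this sum does not converge: for $k = n^{\gamma}$ with $\gamma \in (\epsilon/C, 1-\epsilon)$ (a nonempty range whenever $\epsilon$ is small), the factor $(Ck)^{Ck} = \exp(Ck\log(Ck))$ overwhelms $n^{-k\epsilon} = \exp(-k\epsilon \log n)$, so the bound tends to infinity rather than zero. Your ``moderate range'' cutoff $k \leq n^{\epsilon/(2(d+1))}$ sits below $n^{\epsilon/C}$, and Markov's inequality on the total number of $d$-faces only restricts attention to $k \lesssim n^{1-\epsilon/2}$, which still leaves the entire intermediate band uncovered; moreover, even where Markov helps, combining ``conditioned on few $d$-faces'' with the unconditional first-moment bound needs more care than a bare union bound. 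You flag this as ``the hard part'' but do not resolve it, and as written there is no resolution inside your framework.

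The repair --- and the step the paper relies on, if only implicitly --- is that you never need to sum over all $m$ or $k$. If $X$ contains a weakly connected subcomplex on more than $L$ vertices, then truncating your build-up ordering at the first step where the vertex count reaches $L$ produces a weakly connected subcomplex on some $m' \in \{L, \ldots, L+d-1\}$ vertices, since each step adds at most $d$ new vertices. So it suffices to bound the expected number of weakly connected subcomplexes whose vertex count lies in this fixed window of constant width. There $k$ is a bounded constant, so the $(Ck)^{Ck}$ blowup and the $2^{2^L}$-type counting factors are harmless constants, and $n^{1-k\epsilon} = o(1)$ once $L$ (hence $k$) is large enough that $k\epsilon > 1$. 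Inserting this observation in place of your unrestricted sum over $m > L$ closes the gap and brings your argument in line with the paper's.
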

\begin{proof}
If $\epsilon > 1$ a simple first moment argument shows that there are no $d$-faces at all with high probability, so we assume $\epsilon \leq 1$. The idea is to show that for $L$ large, but constant independent of $n$, if $Y$ is any weakly connected subcomplex on $L$ vertices there is some $\delta > 0$ so that the dot product of the $f$-vector of $Y$ denoted $(L, f_Y)$, with $f_Y$ as the $f$-vector omitting the number of vertices, with the vector $(1, \alphavector) := (1, \alpha_1, ..., \alpha_d)$ satisfies
\[(L, f_Y) \cdot (1, \alphavector) = L - f_Y \cdot \alphavector < -\delta.\]
If we have this then the expected number of weakly connected subcomplexes with $L$ vertices in $X$ is at most
\[\binom{n}{L} 2^{2^L} n^{-\alphavector \cdot f_Y} \leq 2^{2^L} n^{L - \alphavector \cdot f_Y} \leq 2^{2^L} n^{-\delta} = o(1).\]
\noindent
The $2^{2^L}$ is simply a trivial upper bound on the number of simplicial complexes on $L$ vertices. 

Now we verify the claimed bound on $L - f_Y \cdot \alphavector$. Suppose that $Y$ is a weakly connected $d$-complex, then by taking a spanning tree on $G(Y)$ we can find an ordering $\sigma_1, ..., \sigma_K$ on the $d$-faces of $Y$ so that for each $2 \leq k \leq K$, $\sigma_{k} \cap (\sigma_1 \cup \cdots \cup \sigma_{k - 1})$ is nonempty. Let $\beta = d + 1 - \alphavector \cdot v_{d+1}$, this is the value of the dot product of the $f$-vector of $\sigma_1$ with $(1, \alphavector)$. Throughout the process of adding the $\sigma_i$'s $\beta$ will denote the value of $f_0 - f_1\alpha_1 - \cdots - f_d \alpha_d$ as the $f$-vector changes. We keep track of how $\beta$ changes as we add the $\sigma_i$'s in their given order. Specifically we want to argue that if $L$ is large enough, that $\beta$ eventually becomes negative. We immediately see that if we add a $\sigma_i$ that doesn't contain any vertices that aren't already in $\sigma_{1} \cup \cdots \cup \sigma_{i - 1}$ that $\beta$ cannot increase, so only have to consider how it changes when we add a face that includes new vertices. Suppose adding $\sigma_i$ adds $t$ new vertices to $Y$, then the effect adding $\sigma_i$ on $\beta$ is adding to it at most the quantity
\[\gamma(t) := t - \alpha_1 \left(\binom{t}{2} + g(t, d, 1)\right) - \alpha_2\left(\binom{t}{3} + g(t, d, 2)\right) - \cdots - \alpha_{d - 1} \left(\binom{t}{d} + g(t, d, d - 1)\right)\]
where $g(t,d,k)$ is the number of $k$-dimensional faces in the $d$-simplex containing at least one vertex from a fixed subset of size $t$ and at least one vertex outside that set, so 
\[g(t, d, k) = \binom{d + 1}{k + 1} - \binom{t}{k + 1} - \binom{(d + 1) - t}{k + 1}\]
So we maximize (over $\alphavector$, not over $t$)  $\gamma(t)$ subject to $\alphavector \geq 0$ and $d - \alphavector \cdot v_{d + 1} = -\epsilon$. Similar to the proof of Lemma \ref{fvectortheorem} it suffices to check when exactly one entry of $(\alpha_1, ..., \alpha_d)$ is nonzero. Set $j$ so that $\alpha_j \neq 0$ and $\alpha_i = 0$ for $i \in \{1, ..., d\}$, with $i \neq j$ and $d - \alpha_j \binom{d + 1}{j + 1} = -\epsilon$. In this case then for any $t \in \{1, ..., d\}$,
\begin{eqnarray*}
\gamma(t) &=& t - \alpha_j \left( \binom{t}{j + 1} + \binom{d + 1}{j + 1} - \binom{t}{j + 1} - \binom{(d + 1) - t}{j + 1} \right) \\
&=& t - \alpha_j \binom{d + 1}{j  + 1} + \alpha_j \binom{(d + 1) - t}{j + 1}  \\
\end{eqnarray*}
We want to verify that this is always negative and bounded away from zero. Since we've assumed that
\[\alpha_j = \frac{d + \epsilon}{\binom{d + 1}{j + 1}},\]
it suffices to prove that for all $t \in \{1, ..., d\}$ and $j \in \{1, ..., d\}$
\[\frac{t}{\binom{d + 1}{j + 1} - \binom{(d + 1) - t}{j + 1}} \leq \frac{d}{\binom{d + 1}{j + 1}}.\]

This is equivalent to showing that 
\[\frac{t}{d} \leq \frac{\binom{d + 1}{j + 1} - \binom{(d + 1) - t}{j + 1}}{\binom{d + 1}{j + 1}}.\]

We prove this as the following claim:
\begin{claim}
For any $n$, $t \in \{1, ..., n - 1\}$, and $k \in \{2, ..., n\}$
\[\frac{t}{n - 1} \leq \frac{\binom{n}{k} - \binom{n - t}{k}}{\binom{n}{k}}\]
\end{claim}
\begin{proof}
The righthand side of the inequality is the probability that a uniform random $k$-element subset of $\{1, ..., n\}$ contains at least one element of a fixed subset of $\{1, ..., n\}$ of size $t$. Obviously that probability gets larger as $k$ increases. So it suffices to verify the inequality when $k = 2$.
\begin{eqnarray*}
&&\frac{\binom{n}{2} - \binom{n - t}{2}}{\binom{n}{2}} \geq \frac{t}{n - 1} \\
&<=>& \frac{n(n - 1) - (n - t)(n - t - 1)}{n(n - 1)} \geq \frac{t}{n - 1} \\
&<=>& \frac{n^2 - n - (n^2 - nt - n - nt + t^2 + t)}{t} \geq n \\
&<=>& \frac{2nt - t^2 - t}{t} \geq n \\
&<=>& 2n - t - 1 \geq n \\
&<=>& n - 1 \geq t
\end{eqnarray*}
and we have assumed that $t \leq n -1$ so we have the claim
\end{proof}
It follows that for every $\sigma_i$ that includes a new vertex $\beta$ decreases by at least some fixed positive constant depending on $d$ and $\epsilon$, and $\beta$ never increases. Thus $\beta$ can be made arbitrarily negative by setting $L$ arbitrarily large. This is sufficient for the first moment argument covered at the beginning of the proof to hold.
\end{proof}

Next we show that small subcomplexes on at most $L$ vertices are too sparse to be 2-cores. To that end we prove the following lemma.
\begin{lemma}\label{SmallCoreLemma}
If $\alphavector$ satisfies $d + 1 - \alphavector \cdot v_{d + 1} < 1$ and $Y$ is a $d$-complex so that for every subcomplex $Y' \subseteq Y$, 
\[\langle f_0(Y'), f_1(Y'), ..., f_d(Y') \rangle \cdot \langle 1, -\alpha_1, ..., - \alpha_d \rangle > 0\]
then $Y$ has a vertex in at most one $d$-face, in particular $Y$ does not contain a 2-core.
\end{lemma}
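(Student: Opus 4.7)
I plan to prove the contrapositive: assume every vertex of $Y$ is contained in at least two $d$-faces of $Y$, and show that
\[f_0(Y) - \alpha_1 f_1(Y) - \cdots - \alpha_d f_d(Y) < 0,\]
which directly contradicts the hypothesis applied to the subcomplex $Y' = Y$ itself.

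The heart of the argument is a double-counting bound giving $f_i(Y) \geq \binom{d+1}{i+1}/d \cdot f_0(Y)$ for each $i \in \{1, \ldots, d\}$. Fix a vertex $v$ of $Y$; by hypothesis there are two distinct $d$-faces $\sigma_1, \sigma_2 \in Y$ containing $v$. Each contains $\binom{d}{i}$ $i$-faces through $v$, and the overlap of these two collections has size at most $\binom{d-1}{i}$, because $(\sigma_1 \cap \sigma_2) \setminus \{v\}$ has at most $d-1$ vertices. Therefore the count $N_i(v)$ of $i$-faces of $Y$ containing $v$ satisfies
\[N_i(v) \;\geq\; 2\binom{d}{i} - \binom{d-1}{i} \;=\; \frac{(d+i)\binom{d}{i}}{d} \;\geq\; \frac{(d+1)\binom{d}{i}}{d},\]
using the identity $\binom{d-1}{i} = (d-i)\binom{d}{i}/d$ and the fact that $d + i \geq d + 1$ when $i \geq 1$. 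Summing via $\sum_v N_i(v) = (i+1) f_i(Y)$ and rearranging gives the claimed $f_i(Y) \geq \binom{d+1}{i+1}/d \cdot f_0(Y)$.

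Given this, the hypothesis $d + 1 - \alphavector \cdot v_{d+1} < 1$ means $\alphavector \cdot v_{d+1} > d$, so
\[\sum_{i=1}^d \alpha_i f_i(Y) \;\geq\; \frac{f_0(Y)}{d}\, \alphavector \cdot v_{d+1} \;>\; f_0(Y),\]
which delivers the strict inequality $f_0(Y) - \alpha_1 f_1(Y) - \cdots - \alpha_d f_d(Y) < 0$ required for the contradiction. The ``in particular'' clause follows by applying the first conclusion to any subcomplex $Y' \subseteq Y$, which inherits the hypothesis since subcomplexes of $Y'$ are also subcomplexes of $Y$: such $Y'$ has a vertex in at most one $d$-face of $Y'$, so no subcomplex of $Y$ can be a 2-core.

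The main obstacle is locating the right constant in the lower bound for $N_i(v)$ and thus for $f_i(Y)$. The worst-case configuration---two $d$-faces sharing exactly $d-1$ vertices besides $v$---produces the bound $N_i(v) \geq 2\binom{d}{i} - \binom{d-1}{i}$, and the downstream inequality $f_i \geq \binom{d+1}{i+1}/d \cdot f_0$ is already tight at $i = 1$. This sharpness is precisely what matches the hypothesis $\alphavector \cdot v_{d+1} > d$, suggesting the lemma is essentially best possible given the threshold in the main theorem.
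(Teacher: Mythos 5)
Your proof is correct and follows essentially the same route as the paper: both rest on double-counting $i$-faces through vertices together with the binomial inequality $2\binom{d}{i} - \binom{d-1}{i} \geq \frac{i+1}{d}\binom{d+1}{i+1}$ (the paper's claim specialized to $m = d-1$). The paper averages to locate a single ``deficient'' vertex of low weighted degree and then iterates, whereas you globalize the sum over all vertices and argue by contraposition on $Y' = Y$, passing to subcomplexes only for the ``in particular'' clause; these are two presentations of the same computation.
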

\begin{proof}
Suppose $Y$ is a $d$-complex satisfying the inequality for $\alphavector$ so that $d + 1 - \alphavector \cdot v_{d + 1} < 1$. For a vertex $w$ let $\deg_i(w)$ denote the number of $i$-dimensional faces containing $w$, then 
\[\sum_{w \in Y} \left( 1 - \frac{1}{2} \deg_1(w) \alpha_1 - \frac{1}{3} \deg_2(w) \alpha_2 - \cdots - \frac{1}{d + 1} \deg_d(w) \alpha_d \right) = f_0(Y) - \alpha_1 f_1(Y) - \cdots - \alpha_d f_d(Y) > 0.\]

Therefore there is some $w$ so that 
\[\sum_{i = 1}^d \frac{1}{i + 1} \deg_i(w) \alpha_i < 1.\]

We want to show that $w$ does not belong to at least two $d$-simplices.  Suppose that $w$ is contained in at least two $d$-simplices, then the link of $w$ contains two $(d - 1)$-simplices $\sigma$ and $\tau$. If $|\sigma \cap \tau| = m \leq d - 1$ then

\[\sum_{i = 1}^d \frac{1}{i + 1} \deg_i(w) \alpha_ i\geq \sum_{i =1}^d \frac{1}{i + 1} \left( 2\binom{d}{i} - \binom{m}{i} \right) \alpha_i\]

The assumption that $d + 1 - \alphavector \cdot v_{d + 1} < 1$ gives us that

\[\sum_{i = 1}^{d} \binom{d+1}{i + 1} \alpha_i > d.\]
Combining the previous three inequalities then there must be some $0 \leq m \leq d$ so that
\[\sum_{i = 1}^{d} \binom{d+1}{i + 1} \alpha_i >  \sum_{i =1}^d \frac{d}{i + 1} \left( 2\binom{d}{i} - \binom{m}{i} \right) \alpha_i\]

We next prove the following claim. Doing so contradicts our assumption that $w$ belongs to at least two $d$-simplices, so $w$ can be taken to be our vertex of degree at most 1 and we then apply the same argument to $Y\setminus \{w\}$ and repeat until all the simplices have been deleted by removing free or isolated vertices verifying that $Y$ does not contain a 2-core.
\begin{claim}
For any $d \geq 1$, $i \in \{1, ..., d\}$ and $m \in \{0, ..., d - 1\}$, 
\[\binom{d + 1}{i + 1} \leq \frac{d}{i + 1} \left( 2\binom{d}{i} - \binom{m}{i} \right) \]
\end{claim}
\begin{proof}
Clearly we just have to check $m = d - 1$.
\begin{eqnarray*}
 &&\frac{d}{i + 1} \left( 2\binom{d}{i} - \binom{d - 1}{i} \right) - \left( \frac{d + 1}{i + 1} \right) \geq 0 \\
&<=>& d \left(2 \frac{d!}{(d - i)!} - \frac{(d - 1)!}{(d - 1 - i)!} \right) - \frac{(d + 1)!}{(d - i)!} \geq 0 \\
&<=>& d \left(2d! - (d - 1)! (d - i) \right) - (d + 1)! \geq 0 \\
&<=>& d! \left( 2d - (d - i) \right) \geq (d + 1)! \\
&<=>& d + i \geq d + 1 \\
&<=>& i \geq 1.
\end{eqnarray*}
The claim is verified.
\end{proof}
\end{proof}

With the previous lemmas we are ready to prove the sparse side of Theorem \ref{maintheorem}
\begin{proof}[Proof of sparse side of Theorem \ref{maintheorem}]
Suppose that $d + 1 - \alpha \cdot v_{d + 1} < 1$. We show that with high probability $X \sim X(n; n^{-\alphavector})$ is at most $d$-dimensional and the pure $d$-part of $X$ has no 2-core. If this holds then by Theorem \ref{CimpliesE}, $X$ will be embeddable in $\R^{2d}$. From Lemma \ref{fvectortheorem}, $d + 1 - \alpha \cdot v_{d + 1} < 1$ implies that $d + 2 - \alpha \cdot v_{d + 2} < 0$ and the expected number of $(d+1)$-faces is $O(n^{d + 2 - \alpha \cdot v_{d + 2}}) = o(1)$, so $X$ is $d$-dimensional with high probability. Additionally Lemma \ref{LargeCoreLemma} implies that with high probability there is some $L$ so that all minimal 2-cores in $X$ have at most $L$ vertices. 

However, the possibility of small cores is handled by Lemma \ref{SmallCoreLemma} and a first moment argument. By Lemma \ref{SmallCoreLemma} for any $Y$ that is a $d$-dimensional 2-core there is a subcomplex $Y' \subseteq Y$ so that 

\[\langle f_0(Y'), f_1(Y'), ..., f_d(Y') \rangle \cdot \langle 1, -\alpha_1, ..., - \alpha_d \rangle \leq 0\]
However if equality holds, then by some small perturbation of $\alphavector$ we could get a contradiction to Lemma \ref{SmallCoreLemma} and so we actually have a strict inequality. Now as we have ruled out minimal 2-cores on more than $L$ vertices if we can show that $X$ does not contain any subcomplexes $Y$ on at most $L$ vertices so that $\langle f_0(Y), f_1(Y), ..., f_d(Y) \rangle \cdot \langle 1, -\alpha_1, ..., - \alpha_d \rangle < 0$ we will be done. Let $\mathcal{F}$ be the family of such subcomplexes. As $\mathcal{F}$ is finite there is some $\delta > 0$ so that for all $Y \in \mathcal{F}$ $\langle f_0(Y), f_1(Y), ..., f_d(Y) \rangle \cdot \langle 1, -\alpha_1, ..., - \alpha_d \rangle < -\delta$. By linearity of expectation the expected number of complexes in $\mathcal{F}$ that are subcomplexes of $X$ is at most
\[|\mathcal{F}|L! n^{-\delta}\]
because the number of copies of $Y$ in $X$ for $Y \in \mathcal{F}$ in expectation is at most
\[\binom{n}{f_0(Y)} n^{-f_1(Y) \alpha_1 -f_2(Y)\alpha_2 - \cdots - f_d(Y) \alpha_d} < n^{-\delta}.\]
As $|\mathcal{F}|$ and $L$ are constants independent of $n$ the expected number of subcomplexes of $\mathcal{F}$ in $X$ is $o(1)$, so with high probability $X$ contains no element of $\mathcal{F}$ and hence no 2-core. Thus with high probability $X$ embeds in $\R^{2d}$ as it satisfies the assumptions of Theorem \ref{CimpliesE}.
\end{proof}

\section{Dense side}
The proof for the dense side is based on Radon's theorem. Recall that Radon's theorem, originally proved in \cite{Radon}, states that any set of $d + 2$ points in $\R^d$ can be partitioned into two sets whose convex hulls intersect and if the point set is generic the Radon partition is unique. Thus if we have a linear embedding of $X \sim X(n; n^{-\alphavector})$ into $\R^{2d}$ then for any set of $2d + 2$ vertices we get from Radon's theorem a pair of simplices on those vertices that must be excluded from $X$. Given an embedding of $n$ points in $\R^{2d}$ and a simplicial complex $X$ on $[n]$ we say that $S \in \binom{[n]}{2d + 2}$ is a \emph{Radon match} if in the Radon partition on $S$ into two subsets $S_1$ and $S_2$, the simplex on $S_1$ and the simplex on $S_2$ are both present in $X$. If $X$ has a Radon match for every embedding of the vertices of $X$ into $\R^{2d}$ then $X$ is not linearly embeddable in $\R^{2d}$. Our strategy is to show that with high probability when $X \sim X(n; n^{-\alphavector})$ on the dense side of the claimed phase transition every embedding of the vertices has at least one Radon match. We also will assume throughout that all embeddings have vertices placed generically in $\R^{2d}$ but it is clear that if there is a linear embedding of $X$ into $\R^{2d}$ then by a sufficiently small perturbation we have a generic embedding.

Turning the strategy into a proof requires three ingredients. First we need some control on which type of splits of our $(2d + 2)$-sets we see. If, for example, they were all split as $2d + 1$ vertices in convex position surrounding a single vertex in the interior, we would not expect to see any Radon partitions just beyond the phase transition. Second, for our purposes any two embeddings of the $n$ vertices in $\R^d$ can be regarded as equivalent if they induce the same Radon partition on all $2(d + 2)$-subsets. Therefore we need an upper bound on how many nonequivalent embeddings there are to consider. Third we need an upper bound on the probability that a particular embedding has no Radon match and this upper bound has to go to zero faster than the number of nonequivalent embeddings is going to infinity so that we can take a union bound.

For the first point we use the following stronger version of the classical van Kampen--Flores Theorem.
\begin{theorem}[Theorem 6.6 of \cite{BFZ}]\label{BFZtheorem}
Let $d \geq 1$. Then for every embedding of $d + 3$ points in $\R^d$ there exists two disjoint sets $S_1$ and $S_2$ with $|S_1| = \lfloor (d + 2)/2 \rfloor$ and $|S_2| = \lceil (d + 2)/2 \rceil$ so that the convex hull of $S_1$ and $S_2$ intersect.
\end{theorem}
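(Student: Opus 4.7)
The plan is to reduce the theorem, via Gale duality, to a two-dimensional rotation problem and then finish with a discrete intermediate value argument. First I would handle the non-generic case by a perturbation argument: assuming the result for generic configurations (where every $(d+2)$-subset of the $d+3$ points is affinely independent, so each admits a unique Radon partition), the conclusion for an arbitrary embedding of $d+3$ points follows because intersection of closed convex hulls is a closed condition and there are only finitely many candidate subsets $S_1, S_2$ to pigeonhole over along any perturbation sequence.

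Assuming generic position, I would construct the Gale dual. The space of affine dependencies
\[\Bigl\{ \lambda \in \R^{d+3} : \sum_i \lambda_i = 0 \text{ and } \sum_i \lambda_i p_i = 0 \Bigr\}\]
is two dimensional, and fixing a basis yields $d+3$ nonzero vectors $g_1, \ldots, g_{d+3} \in \R^2$ satisfying $\sum_i g_i = 0$. Given a unit direction $v \in S^1$, the signs of $\langle v, g_i \rangle$ define a partition of $\{1, \ldots, d+3\}$; when $v = v_j$ is perpendicular to a single $g_j$, generic position forces the remaining inner products to be nonzero, and the induced partition of the other $d+2$ indices is exactly the Radon partition for the $(d+2)$-subset omitting $p_j$.

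I would then rotate $v$ through the half-circle from a generic $v_0$ to $-v_0$. Between consecutive critical directions, $|A(v)| := |\{i : \langle v, g_i \rangle > 0\}|$ is constant; at each critical direction exactly one $g_j$ switches sides, so $|A(v)|$ changes by $\pm 1$. This produces a sequence of run values $a_0, a_1, \ldots, a_{d+3}$ with $|a_{i+1} - a_i| = 1$ and $a_{d+3} = d+3 - a_0$ by antipodal symmetry. The Radon partition obtained at the critical direction between runs $i-1$ and $i$ has sizes $\min(a_{i-1}, a_i)$ and $d+2 - \min(a_{i-1}, a_i)$.

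The finishing step is a discrete intermediate value argument: a $\pm 1$ walk from $a_0$ to $d+3 - a_0$ must cross the midpoint of $\{0, \ldots, d+3\}$, and a short case check split on the parity of $d+2$ and on whether $a_0$ lies above or below the midpoint produces a transition where $\min(a_{i-1}, a_i)$ lands in $\{\lfloor (d+2)/2 \rfloor, \lceil (d+2)/2 \rceil\}$, which is exactly the balanced Radon partition required. I expect the main obstacle to be the Gale-duality bookkeeping, specifically verifying that generic affine position of the $p_i$'s translates into exactly the right nondegeneracy of the $g_i$'s at every critical direction; the rotation and case analysis are then routine.
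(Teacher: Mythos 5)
The paper does not give a proof of this statement; it is imported verbatim as Theorem~6.6 of \cite{BFZ}, so there is no in-paper argument to compare against. Your Gale-duality proof is nevertheless correct and self-contained, and it is worth recording how it differs from the source. The BFZ result is an instance of a ``constrained Tverberg theorem'' proved there by topological means (an equivariant configuration-space/test-map argument). Your argument instead exploits the fact that for $d+3$ points in $\R^d$ the space of affine dependencies has dimension exactly $2$, so the Gale dual $g_1,\dots,g_{d+3}$ lives in the plane and the whole problem collapses to rotating a line. This is a genuinely more elementary route for this particular $r=2$, $n=d+3$ case, though it does not generalize the way the topological machinery does.

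The details you flagged as potential obstacles all check out. Genericity of the $p_i$ (every $d+1$ of them affinely independent) is exactly equivalent to the $g_i$ being pairwise nonparallel and nonzero: $g_i\parallel g_j$ would produce a nonzero affine dependence supported on $[d+3]\setminus\{i,j\}$, i.e.\ $d+1$ affinely dependent points. Hence in the open half-circle from $v_0$ to $-v_0$ there are exactly $d+3$ simple critical directions, the run sizes $a_0,\dots,a_{d+3}$ form a $\pm1$ walk, and antipodality gives $a_{d+3}=d+3-a_0$. At the transition between runs $i-1$ and $i$ the vanishing coordinate is dropped, so the positive part of the induced Radon partition has size $\min(a_{i-1},a_i)$ and the negative part has size $d+2-\min(a_{i-1},a_i)$; since some $a_\ell \le (d+2)/2$ and some $a_{\ell'} \ge (d+4)/2$ (either because $a_0\ne (d+3)/2$, or when $d+3$ is even and $a_0=(d+3)/2$ by looking at the very first step), a unit-step walk must make a transition whose $\min$ lies in $\{\lfloor(d+2)/2\rfloor,\lceil(d+2)/2\rceil\}$, which is exactly the balanced partition. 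Finally, the limiting argument for non-generic configurations is sound: ``$\operatorname{conv}(S_1)\cap\operatorname{conv}(S_2)\ne\emptyset$'' is a closed condition in the point coordinates, and there are only finitely many candidate pairs $(S_1,S_2)$, so a pigeonhole along a perturbation sequence finishes it.
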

As a corollary to this we have the following regarding an embedding of $n$ points in $\R^{d}$
\begin{corollary}\label{EvenSplitsCorollary}
For any embedding of $n$ points in $\R^d$ there are at least
\[\frac{1}{d + 3} \binom{n}{d + 2}\]
$(d + 2)$-subsets of $[n]$ that split into two sets $S_1$ and $S_2$ with $|S_1| = \lfloor (d + 2)/2 \rfloor$ and $|S_2| = \lceil (d + 2)/2 \rceil$ so that the convex hull of $S_1$ and $S_2$ intersect.
\end{corollary}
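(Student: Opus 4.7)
The plan is to deduce the corollary from Theorem~\ref{BFZtheorem} by a simple double counting argument over pairs (a $(d+3)$-subset, a $(d+2)$-subset contained in it).

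First, I would apply Theorem~\ref{BFZtheorem} to every $(d+3)$-subset $T$ of the $n$ embedded points. The theorem guarantees disjoint $S_1, S_2 \subset T$ with $|S_1| = \lfloor (d+2)/2 \rfloor$ and $|S_2| = \lceil (d+2)/2 \rceil$ whose convex hulls intersect. Since $|S_1| + |S_2| = d+2$, the union $R := S_1 \cup S_2$ is a $(d+2)$-subset of $T$ that splits evenly with intersecting convex hulls, i.e.\ a good subset in the sense of the corollary. Thus each $(d+3)$-subset $T$ of $[n]$ contributes at least one good $(d+2)$-subset $R \subseteq T$.

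Now let $N$ denote the number of good $(d+2)$-subsets of $[n]$, and count pairs $(R,T)$ with $R \subset T$, $|R|=d+2$, $|T|=d+3$, and $R$ good. From the $T$-side, the above gives at least $\binom{n}{d+3}$ such pairs. From the $R$-side, each good $R$ extends to a $(d+3)$-subset in exactly $n - (d+2)$ ways. Combining,
\[
N \cdot (n - d - 2) \;\geq\; \binom{n}{d+3} \;=\; \frac{1}{d+3} \cdot (n-d-2) \cdot \binom{n}{d+2},
\]
so $N \geq \tfrac{1}{d+3}\binom{n}{d+2}$, as claimed.

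There isn't really a hard step here; the only thing to be careful about is using the correct sizes in Theorem~\ref{BFZtheorem} so that $|S_1|+|S_2|=d+2$ rather than $d+3$, which is what makes it a statement about $(d+2)$-subsets of $[n]$ and allows the factor $\tfrac{1}{d+3}$ to fall out of the double count.
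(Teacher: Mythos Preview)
Your proof is correct and is essentially identical to the paper's own argument: both apply Theorem~\ref{BFZtheorem} to every $(d+3)$-subset and then double count pairs (good $(d+2)$-subset, containing $(d+3)$-subset), using that each good $(d+2)$-subset lies in $n-(d+2)$ of the larger sets to extract the factor $\tfrac{1}{d+3}$.
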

\begin{proof}
By Theorem \ref{BFZtheorem} for any embedding of $n$ points in $\R^d$ every subset of size $d + 3$ contributes at least one pair $(S_1, S_2)$ satisfying the required condition. Such a pair has $|S_1 \cup S_2| = d + 2$, and each belongs to at most $n - (d + 2)$ $(d + 3)$-element subsets. Thus the number of such pairs $(S_1, S_2)$ is at least
\[\frac{1}{n - (d + 2)}\binom{n}{d + 3} = \frac{1}{d + 3} \binom{n}{d + 2}.\]
\end{proof}

We now turn our attention to bounding the number of embeddings of the vertices we have to consider. We consider two general position embeddings $f, g: [n] \rightarrow \R^d$ to be equivalent if for every $S \in \binom{[n]}{d}$ the Radon partition of $S$ under the image of $f$ is the same as the Radon partition of $S$ under the image of $g$. 

For this we are really counting labeled, simple \emph{order types} of a configuration of $n$ points in $\R^d$. Recall that the order type of a labeled set of points $p_1, ..., p_n$ in $\R^d$ is the $\binom{n}{d + 1}$-tuple of signs of determinants of $(d + 1) \times (d +1)$ submatrices of 
\[\begin{pmatrix} p_1 & \cdots & p_n \\ 1 & \cdots & 1 \end{pmatrix}.\]

That is the order type is 
\[\left( \text{sgn} \det \begin{pmatrix} p_{i_1} & \cdots & p_{i_{d + 1}} \\ 1 & \cdots & 1 \end{pmatrix} \right)_{1 \leq i_1 < \cdots < i_{d + 1} \leq n}\]

By construction then for any set of $d + 2$ points in general position the order type of those $d + 2$ points determines the Radon partition. Indeed for $p_1$, ..., $p_{d + 1}$, $p_{d + 2}$ we have $p_1$, ..., $p_{d + 1}$ are vertices of a simplex and for any $i$, the order type (either $-1$ or $1$) of $p_1,.., p_{i - 1}, p_{i + 1}, .., p_{d + 1}, p_{d + 2}$ determines on which side of the hyperplane through $p_1,.., p_{i - 1}, p_{i + 1}, .., p_{d + 1}$, $p_{d + 2}$ sits. Thus the order type of $p_1, ..., p_{d + 1}, p_{d + 2}$ determines which region of the hyperplane arrangement on $p_1, ..., p_{d + 1}$ contains $p_{d + 2}$. This region uniquely determines the Radon partition on $p_1, .., p_{d + 2}$. For more background on order types see the survey of Goodman and Pollack \cite{GoodmanPollackSurvey}. By simple order types we mean order types where none of the determinant signs are zero, these are all the order types that have to be considered for generic point sets.

The following enumeration result of Goodman and Pollack on order types will be sufficient for our proof. There is also a stronger result for all labeled order types, not just simple ones, due to \cite{AlonOrderTypes}
\begin{theorem}\cite[Theorem 1]{GoodmanPollack}\label{GoodmanPollackTheorem}
The number of simple order types for $n$ labeled points in general position in $\R^d$ is at most 
\[n^{d(d + 1) n}.\]
\end{theorem}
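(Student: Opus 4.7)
The plan is to deduce the bound from Warren's theorem (a Milnor--Thom-type estimate) on the number of sign patterns realized by a system of real polynomials, applied to the determinants that define the order type. The configuration space of $n$ labeled points in $\R^d$ is parametrized by $N := dn$ real coordinates, and the order type of a configuration is, by construction, the vector of signs of the $m := \binom{n}{d+1}$ polynomials
\[
P_{i_1,\dots,i_{d+1}}(p_1,\dots,p_n) := \det\begin{pmatrix} p_{i_1} & \cdots & p_{i_{d+1}} \\ 1 & \cdots & 1 \end{pmatrix},
\]
each of which has degree $d+1$ in the coordinates of the $p_i$. A simple order type is precisely a sign vector in $\{-1,+1\}^m$ realized on some connected component of $\R^N$ where none of the $P_{i_1,\dots,i_{d+1}}$ vanish, so counting simple order types reduces to counting such sign patterns.

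Next, I would invoke Warren's theorem: for real polynomials $f_1,\dots,f_m$ on $\R^N$ of degree at most $k$, the number of nonzero sign patterns realized is at most $\left( \frac{4ekm}{N} \right)^N$ whenever $m \geq N$. With $k = d+1$, $m = \binom{n}{d+1}$, and $N = dn$, the hypothesis $m \geq N$ holds for all $n$ exceeding a constant depending on $d$; small $n$ can be dealt with trivially since $n^{d(d+1)n}$ dwarfs any absolute bound on configurations of a fixed number of points. Using $\binom{n}{d+1} \leq n^{d+1}/(d+1)!$, Warren's bound becomes
\[
\left( \frac{4e(d+1)\binom{n}{d+1}}{dn} \right)^{dn} \leq \left( \frac{4e}{d \cdot d!} \right)^{dn} n^{d^2 n}.
\]
For $n$ larger than an absolute constant the factor $(4e/(d\cdot d!))^{dn}$ is swallowed by one extra power $n^{dn}$, upgrading the exponent from $d^2 n$ to $d(d+1)n$ and yielding the claimed bound.

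The main potential obstacle is purely bookkeeping: verifying the hypothesis $m \geq N$ of Warren's theorem for the relevant range of $n$ and confirming that the constant factor can be absorbed into a single extra factor of $n$ in each of the $dn$ bases. Both are routine once one commits to the Warren/Milnor--Thom framework. One could alternatively follow the original Goodman--Pollack strategy via allowable sequences or chirotopes relative to a reference simplex, but the polynomial sign-pattern route is the cleanest way to land precisely on the exponent $d(d+1)n$.
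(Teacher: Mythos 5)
Your proposal is correct and takes essentially the same route as Goodman and Pollack's original proof, which the paper cites without reproving: apply a Warren/Milnor--Thom sign-pattern bound to the $\binom{n}{d+1}$ determinant polynomials in the $dn$ coordinates of the configuration, then absorb constants into an extra factor of $n$ per coordinate. One minor inaccuracy: since the last row of each matrix is the constant row of ones, these determinants have degree $d$ (not $d+1$) in the coordinates of the $p_i$; using $d+1$ only weakens the intermediate estimate slightly and does not affect the conclusion.
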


We note that the number of order types is a technically a refinement of the number of embeddings which induce the same Radon partitions on every $(d + 2)$-subset of $n$ points. If we have only $d + 2$ points in general position we have a distinct Radon partition, $(A, B)$, but we can change the order type by permuting the points relabeling the points in $A$ and the points in $B$. However the direction that we need is that the order type determines the Radon partition discussed above. So we have the following lemma as an immediate corollary to Theorem \ref{GoodmanPollackTheorem}.

\begin{lemma}\label{NumberOfEmbeddings}
For $d \geq 1$ the number of nonequivalent, general position embeddings of $n$ vertices in $\R^d$ is at most
\[n^{d(d + 1) n}.\]
\end{lemma}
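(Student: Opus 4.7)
The plan is to observe that the stated lemma is a formal consequence of the discussion immediately preceding it together with Theorem \ref{GoodmanPollackTheorem}. The key point is to articulate the refinement relationship between three equivalence relations on generic embeddings $f : [n] \to \R^d$: (i) having the same labeled simple order type, (ii) inducing the same Radon partition on every $(d+2)$-subset of $[n]$, and (iii) being equal as maps. Clearly (iii) refines (i), and we need to show that (i) refines (ii), so that the number of (ii)-equivalence classes is bounded by the number of (i)-equivalence classes, which Theorem \ref{GoodmanPollackTheorem} controls.

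First I would recall that for a generic embedding $f$ of a set $S = \{p_1, \dots, p_{d+2}\}$ in $\R^d$, the Radon partition of $S$ is uniquely determined by the signs of the $\binom{d+2}{d+1} = d+2$ determinants
\[\det \begin{pmatrix} f(p_{i_1}) & \cdots & f(p_{i_{d+1}}) \\ 1 & \cdots & 1 \end{pmatrix},\]
because, as explained in the paragraph preceding the lemma, these signs record on which side of each facet-hyperplane the remaining point sits, and this combinatorial information determines which region of the hyperplane arrangement on the other $d+1$ points contains the last one, which in turn determines the unique Radon partition. Consequently, if $f$ and $g$ have the same labeled simple order type, then they agree on all these determinant signs for every $S \in \binom{[n]}{d+2}$, and hence induce the same Radon partition on every such $S$. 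Thus two embeddings with the same order type are equivalent in the sense of the lemma.

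It follows that the number of equivalence classes of generic embeddings under the Radon-partition equivalence is at most the number of labeled simple order types of $n$ points in general position in $\R^d$. Applying Theorem \ref{GoodmanPollackTheorem} gives the bound $n^{d(d+1)n}$ and proves the lemma.

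I do not expect a substantive obstacle: the only thing to be careful about is to state clearly that the order type \emph{refines} rather than coincides with the Radon-equivalence (as the paper itself notes), so the bound goes in the direction we need. No calculation is required beyond citing Theorem \ref{GoodmanPollackTheorem}.
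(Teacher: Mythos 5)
Your proposal is correct and takes essentially the same approach as the paper: the paper also observes (in the paragraph just before the lemma) that the labeled simple order type determines the Radon partition on every $(d+2)$-subset, so the Radon-partition equivalence is coarser than order-type equivalence, and the bound then follows as an immediate corollary of Theorem~\ref{GoodmanPollackTheorem}. Your write-up simply spells out the refinement argument slightly more explicitly than the paper does.
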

For the last piece of the proof for the dense side of the phase transition we bound the probability that for any fixed embedding of the vertices of $X$ we have no Radon matches.
\begin{lemma}\label{ConcentrationInequality}
Fix $d \geq 1$ and let $\pi: [n] \rightarrow \R^{2d}$ be a general position embedding for $n$ vertices in $\R^{2d}$. For $d + 1 - \alphavector \cdot v_{d + 1} > 1$ the probability that $X \sim X(n; n^{-\alphavector})$ has no Radon matches under the mapping into $\R^{2d}$ induced by $\pi$ is at most
\[\exp(-n^{1 + \epsilon})\]
for some $\epsilon > 0$ that depends only on $\alphavector$ and $d$.
\end{lemma}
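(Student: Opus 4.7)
The plan is to apply Janson's inequality to the collection of Radon-match events. Let $\mathcal{R}$ be the set of unordered pairs $\{S_1,S_2\}$ of disjoint $(d+1)$-element subsets of $[n]$ whose $\pi$-images have intersecting convex hulls; by Corollary \ref{EvenSplitsCorollary}, $|\mathcal{R}| \geq \frac{1}{2d+3}\binom{n}{2d+2} = \Theta(n^{2d+2})$. For each $P=\{S_1,S_2\}\in\mathcal{R}$, let $A_P$ be the increasing event that both $d$-simplices $\sigma(S_1)$ and $\sigma(S_2)$ (and hence all of their subfaces) lie in $X$. Writing $\beta := d+1-\alphavector\cdot v_{d+1}>1$, we have $\Pr(A_P)=n^{-2\alphavector\cdot v_{d+1}}$, so the expected number of Radon matches is $\mu := \sum_P\Pr(A_P) = \Theta(n^{2\beta})$. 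Janson's inequality then yields
\[
\Pr\!\left(\bigcap_{P\in\mathcal{R}}\overline{A_P}\right)\leq\exp\!\left(-\frac{\mu^2}{2(\mu+\Delta)}\right),
\]
where $\Delta := \sum_{\{P,Q\}:\,A_P\not\perp A_Q}\Pr(A_P\cap A_Q)$. Because $p_0=1$, the events $A_P$ and $A_Q$ are dependent precisely when their supports share a face of positive dimension.

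The heart of the argument is to bound $\Delta$. For a dependent pair $\{P,Q\}$, let $W=W_{P,Q}$ denote the $d$-complex generated by the four $d$-simplices appearing in $P$ and $Q$; then $\Pr(A_P\cap A_Q)=n^{-\alphavector\cdot(f_1,\ldots,f_d)(W)}$, and for a fixed isomorphism type of $W$ the number of pairs realising it is $\Theta(n^{f_0(W)})$. Since $W$ is generated by at most four $d$-simplices the number of possible isomorphism types is bounded by a constant depending only on $d$, so
\[
\Delta = O\!\left(\max_W\,n^{f_0(W)-\alphavector\cdot(f_1,\ldots,f_d)(W)}\right),
\]
the max ranging over the $W$'s that arise from dependent pairs. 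I would check the two extremal overlap patterns directly: (a) if $P$ and $Q$ share exactly one $d$-simplex, then $W$ is three disjoint $d$-simplices with exponent $3(d+1)-3\alphavector\cdot v_{d+1}=3\beta$; (b) if $P$ and $Q$ share only a single edge lying inside one $d$-simplex of each, then $W$ is four $d$-simplices glued along a single edge with exponent $4\beta-2+\alpha_1$. The hypothesis $\alphavector\cdot v_{d+1}<d$ forces $\alpha_1<\tfrac{2}{d+1}\leq 1$, so (b) gives exponent strictly less than $4\beta-1$; and $\beta>1$ gives (a) exponent $3\beta<4\beta-1$. A linear-programming argument in the spirit of the proof of Lemma \ref{fvectortheorem} handles the intermediate overlap patterns (sharing a simplex of dimension $2,\ldots,d-1$, double sharing across both simplices of a pair, etc.), producing a constant $c=c(\alphavector,d)>1$ with $\max_W\bigl(f_0(W)-\alphavector\cdot(f_1,\ldots,f_d)(W)\bigr)\leq 4\beta-c$.

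Combining these estimates gives $\Delta=O(n^{4\beta-c})$ and hence $\mu^2/(2(\mu+\Delta))=\Omega(n^c)$; since $c>1$, taking $\epsilon := (c-1)/2>0$ produces the stated bound $\Pr(Y=0)\leq\exp(-n^{1+\epsilon})$, with $\epsilon$ depending only on $\alphavector$ and $d$. The main obstacle will be carrying out the combinatorial case analysis for the exponent $f_0(W)-\alphavector\cdot(f_1,\ldots,f_d)(W)$ uniformly across all overlap patterns, together with the accompanying extremal optimization over $\alphavector$'s satisfying $d+1-\alphavector\cdot v_{d+1}>1$. This is conceptually similar to, but more elaborate than, the simplex-method argument inside Lemma \ref{fvectortheorem}, since $W$ here is generated by four (potentially overlapping) $d$-simplices rather than a single chain of faces.
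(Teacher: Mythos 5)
Your proposal follows the same high-level strategy as the paper: lower-bound the expected number of evenly split Radon matches via Corollary~\ref{EvenSplitsCorollary}, apply Janson's inequality, and bound the dependency term. The expectation computation and the application of Janson are both correct. Where your write-up falls short is precisely the step you flag as ``the main obstacle'': you compute the exponent $f_0(W)-\alphavector\cdot(f_1,\ldots,f_d)(W)$ only for two overlap patterns and then assert, without carrying it out, that a linear-programming argument handles the rest. That is the substantive content of the lemma and cannot be deferred.

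The paper avoids the case analysis entirely by choosing a better parametrization of the overlap. Rather than classifying the isomorphism type of $W$, it records the sizes $m_1,m_2,m_3,m_4$ of the four pairwise intersections $A\cap B'$, $A\cap A'$, $B\cap A'$, $B\cap B'$, which (because $A\cap B=A'\cap B'=\emptyset$) are pairwise disjoint. Inclusion--exclusion then gives the exact identity
\[
f_0(W)-\alphavector\cdot(f_1,\ldots,f_d)(W)=4\beta-\sum_{i=1}^{4}\bigl(m_i-\alphavector\cdot v_{m_i}\bigr),
\]
so the quantity you want to bound away from $4\beta-1$ decomposes into a sum of single-$m_i$ terms. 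Each term is $0$ if $m_i=0$, equal to $1$ if $m_i=1$, and strictly greater than $1$ if $2\le m_i\le d+1$; the last fact is exactly the second half of Lemma~\ref{fvectortheorem} applied iteratively from $s=d+1$ down to $s=2$. Since a dependent pair forces at least one $m_i\ge 2$, the sum exceeds $1$, and finiteness of the range gives a uniform gap $c=c(\alphavector,d)>1$. This decomposition is the missing idea in your argument: without it, the function $W\mapsto f_0(W)-\alphavector\cdot f(W)$ does not visibly reduce to Lemma~\ref{fvectortheorem}, and the ``extremal cases'' (a) and (b) you check --- which correspond to $(m_1,\ldots,m_4)=(d+1,0,0,0)$ and $(2,0,0,0)$ respectively --- do not by themselves establish that the intermediate patterns are no worse.
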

For the proof we will make use of the following form of Janson's equality from Theorem 23.13 of \cite{FriezeRandomGraphs}:
\begin{theorem}[Janson's Inequality]
Let $R$ be a random subset of $[N]$ such that for each $s \in [N]$, $q_s \in (0, 1)$ denotes the probability that $s \in N$ is included in $R$. Let $D_1, ..., D_n$ be a family of $n$ subsets of $R$. Suppose that $S_n$ is a sum of indicator random variables $S_n = I_1 + \cdots + I_n$ where $I_i$ is the indicator for the event $D_i$. Write $i \sim j$ is $D_i \cap D_j \neq \emptyset$ let 
\[\overline{\Delta} = \sum_{\{i, j\}: i \sim j} \mathbb{E}(I_iI_j)\]
then 
\[\Pr(S_n =0) \leq \exp \left( \frac{-(\mathbb{E}(S_n))^2}{2\overline{\Delta}} \right).\]
\end{theorem}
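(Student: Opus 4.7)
My plan is to apply Janson's inequality to $S_n = \sum_{S \in \mathcal{R}} I_S$, where $\mathcal{R} \subseteq \binom{[n]}{2d+2}$ is the collection of $(2d+2)$-subsets whose Radon partition under $\pi$ splits into two parts of equal size $d+1$, and $I_S$ is the indicator that both of the corresponding $d$-simplices lie in $X$. The ground set for Janson is the collection of potential faces of dimension $\geq 1$, each included independently with probability $p_i = n^{-\alpha_i}$. By Corollary \ref{EvenSplitsCorollary} applied in $\R^{2d}$, $|\mathcal{R}| = \Omega(n^{2d+2})$, and because the two simplices in a Radon match are vertex-disjoint (hence have disjoint positive-dimensional face sets), $\mathbb{E}[I_S] = n^{-2\alphavector \cdot v_{d+1}}$. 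Writing $\beta := d+1 - \alphavector \cdot v_{d+1} > 1$, this gives $\mu := \mathbb{E}[S_n] = \Omega(n^{2\beta})$.

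The work is in bounding $\overline{\Delta}$. A pair $(S, S')$ contributes only when their required face sets intersect, forcing $k := |S \cap S'| \geq 2$. I split the sum by $k \in \{2, \ldots, 2d+2\}$: there are $O(n^{4d+4-k})$ such pairs, and writing $Y = Y_1 \cup Y_2$ for the union of the two Radon-match subcomplexes, inclusion--exclusion yields
\[\mathbb{E}[I_S I_{S'}] \;=\; n^{-\alphavector \cdot f(Y)} \;=\; n^{-4\alphavector \cdot v_{d+1} + \alphavector \cdot s},\]
where $s_i$ counts the $i$-faces common to $Y_1$ and $Y_2$. Because the two simplices inside each $Y_j$ are disjoint, this common-face set is the disjoint union, over the four pairs $(\sigma_j, \sigma'_{j'})$, of the $i$-face cliques on $\sigma_j \cap \sigma'_{j'}$; setting $c_{jj'} := |\sigma_j \cap \sigma'_{j'}|$ one has $\sum_{j,j'} c_{jj'} = k$ with each $c_{jj'} \leq d+1$. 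Convexity of $\binom{\cdot}{i+1}$ then gives $\alphavector \cdot s \leq \alphavector \cdot v_k$ when $k \leq d+1$ and $\alphavector \cdot s \leq \alphavector \cdot v_{d+1} + \alphavector \cdot v_{k-d-1}$ when $d+1 \leq k \leq 2d+2$.

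The crucial arithmetic is that for some fixed $\epsilon' > 0$, one has $\alphavector \cdot v_j \leq j - 1 - \epsilon'$ for every $j \in \{2, \ldots, d+1\}$: the case $j = d+1$ is the dense-side hypothesis itself, and smaller $j$ follow by iterating the second part of Lemma \ref{fvectortheorem} (with the $j = 2$ case also absorbed into the iteration when $d \geq 2$ and obtained from $\alpha_1 \binom{d+1}{2} \leq \alphavector \cdot v_{d+1} < d$ when $d = 1$). Combined with the pair count, each $k$-slice of $\overline{\Delta}$ is at most
\[O\!\left( n^{\,4d+4 - k - 4(d+1-\beta) + \alphavector \cdot s} \right) \;=\; O\!\left( n^{\,4\beta + (\alphavector \cdot s - k)} \right) \;=\; O(n^{4\beta - 1 - \epsilon'})\]
uniformly in $k$, so $\overline{\Delta} = O(n^{4\beta - 1 - \epsilon'})$. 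Plugging into Janson,
\[\Pr(S_n = 0) \;\leq\; \exp\!\left(-\mu^2 / (2\overline{\Delta})\right) \;\leq\; \exp(-\Omega(n^{1+\epsilon'})) \;\leq\; \exp(-n^{1 + \epsilon})\]
for any $0 < \epsilon < \epsilon'$ and $n$ large.

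The main obstacle is precisely this uniform savings of one power of $n$: the shared-face contribution $\alphavector \cdot s$ must cancel against the quadratic overlap of pair counts at every overlap size $k$ simultaneously, and it is the tight monotonicity provided by Lemma \ref{fvectortheorem} that makes this work out by exactly the right margin for the Janson exponent to reach $n^{1+\epsilon}$.
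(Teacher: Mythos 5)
The statement you were asked to address is Janson's inequality itself (the theorem quoted from Theorem~23.13 of Frieze's \emph{Introduction to Random Graphs}), which the paper cites as a black box and does not prove. Your proposal does not contain a proof of that inequality; instead it is an argument that \emph{invokes} Janson's inequality to establish the downstream concentration bound in Lemma~\ref{ConcentrationInequality}. A proof of the cited theorem would have to proceed along entirely different lines — for instance, the standard route is to study the exponential moment $\E\bigl[\exp(-tS_n)\bigr]$, peel off one indicator at a time, and use a correlation (Harris/FKG-type) inequality together with the decomposition by the dependency graph $i\sim j$ to bound $\Pr(S_n=0) \le \E\bigl[\exp(-tS_n)\bigr] \le \exp\bigl(-t\mu + t^2\overline{\Delta}\bigr)$, and then optimize over $t$. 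None of that appears in your proposal, so there is a genuine mismatch between what was asked and what you wrote.

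For what it is worth, the application you actually carried out is close in spirit to the paper's proof of Lemma~\ref{ConcentrationInequality}. The paper stratifies the overlap of two Radon matches by the four individual intersection sizes $m_1,\dots,m_4$ (one per pair of parts) and uses the second half of Lemma~\ref{fvectortheorem} to get $\sum_i (m_i - \alphavector\cdot v_{m_i}) > 1$. You instead stratify by the total overlap $k = |S\cap S'|$ and introduce a convexity argument over the $c_{jj'}$ to reduce to the extremal cases $(k,0,0,0)$ and $(d+1,k-d-1,0,0)$. That is a correct alternative bookkeeping, and it converges to the same exponent, but it adds a convexity lemma the paper avoids by keeping the four $m_i$'s separate throughout. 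If you resubmit, either prove the cited Janson inequality or state clearly that it is taken as given and you are proving Lemma~\ref{ConcentrationInequality}.
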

In our case $N$ is all the faces of the simplex on $n$ vertices other than the vertices and $R$ is the random complex in $X(n; n^{-\alphavector})$. We might hesitate for a moment regarding the independence assumption as the faces are not included independently in $X$. However we can instead associate to $\sigma$ in the simplex on $n$ vertices $q_{\sigma} = n^{-\alpha_{|\sigma| - 1}}$, in this way each face is set to be either on or off independently and then the complex $X$ consists of those faces that are switched on and have all their subfaces switched on as well. Each $D_i$ will be a Radon match which corresponds to some collection of faces all switched on independently.
\begin{proof}[Proof of Lemma \ref{ConcentrationInequality}]
Let $Y$ be the random variable counting the number of evenly split Radon matches in $X \sim X(n; n^{-\alphavector})$ under $\pi$, i.e. Radon matches for Radon partitions with $d + 1$ vertices in each part. By linearity of expectation and Corollary \ref{EvenSplitsCorollary} we have
\[\E(Y) \geq \frac{1}{(2d + 3)(2d + 2)^{2d + 2}} n^{2d + 2} n^{-2\alphavector \cdot v_{d + 1}}  = \Theta(n^{2((d + 1) - \alphavector\cdot v_{d + 1})}).\]
Under our assumptions then $\E(Y) \rightarrow \infty$. Now we use Janson's inequality to bound the probability that $Y = 0$. Observe that $Y$ is a sum of indicator random variables $\textbf{1}_{(A, B)}$ where $A$, $B$ is an evenly split Radon partition of $A \cup B$ coming from $\pi$, with $A$ the lexicographically smaller of the two sets with respect to some ordering, and $\textbf{1}_{(A, B)}$ is the indicator random variable for the event that the simplex on $A$ and the simplex on $B$ are both included in $X$. If $(A, B)$ and $(A', B')$ don't share any edges, i.e. if $|A \cap B'|$, $|A \cap A'|$, $|B \cap A'|$, and $|B \cap B'|$ are all of size at most 1, then $\textbf{1}_{(A, B)}$ and $\textbf{1}_{(A', B')}$ are independent. To apply Janson's inequality then we let 
\[\overline{\Delta} = \sum_{\{(A, B), (A', B') \mid (A \cup B), (A' \cup B') \text{ share an edge}\}} \E(\textbf{1}_{(A, B)} \textbf{1}_{(A', B')}),\]
and we compute an upper bound on $\overline{\Delta}$. If $(A, B)$ and $(A', B')$, with $A \cap B$ and $A' \cap B'$ both empty,  share at least an edge then the intersection of the two pairs is a disjoint union of up to four nonempty sets $A \cap B'$, $A \cap A'$, $B \cap A'$, and $B \cap B'$. Let $m_1, m_2, m_3, m_4$ denote the sizes of these sets respectively, note that at least one of the $m_i$'s is at least 2 and all of them are at most $d + 1$. Given $m_1$, $m_2$, $m_3$, and $m_4$ the probability that $A, B, A', B'$ are all included in $X$ is
\[\E(\textbf{1}_{(A, B)} \textbf{1}_{(A', B')}) = n^{- 4\alphavector \cdot v_{d + 1} - (\sum_{i = 1}^4 - \alphavector \cdot v_{m_i})}\]
as the simplices in the intersection are counted exactly twice.

Now for each choice of $m_1, m_2, m_3, m_4$ the number of way to pick for sets of $d + 1$ vertices $A$, $B$, $A'$, and $B'$ so that $|A \cap B| = |A' \cap B'| = 0$ so that $|A \cap B'| = m_1$, $|A \cap A'| = m_2$, $|B \cap A'| = m_3$, and $|B \cap B'| = m_4$ is at most

\[O(n^{4(d + 1) - m_1 - m_2 - m_3 - m_4}).\]

It follows that 
\[\overline{\Delta} = O \left( \max_{m_1, m_2, m_3, m_4} n^{4((d + 1) -  \alphavector \cdot v_{d + 1}) - \sum_{i = 1}^4 (m_i - \alphavector \cdot v_{m_i})} \right)\]
where the maximum is taken over all choices of $m_1, m_2, m_3, m_4$ so that each is at most $d + 1$ and at least one of them is more than 1.
By Janson's inequality then,
\[\Pr(Y = 0) \leq \exp\left(-\Theta\left(n^{\min_{m_1, m_2, m_3, m_4} \{\sum_{i = 1}^4 (m_i - \alphavector \cdot v_{m_i})\}} \right)\right).\]
We now need to verify that 
\[\min_{m_1, m_2, m_3, m_4} \left\{ \sum_{i = 1}^4 (m_i - \alphavector \cdot v_{m_i}) \right\} > 1\]
This follows though since at least one of the $m_i$'s is always at least 2 and they are all always at most $d + 1$. So by the second part of Lemma \ref{fvectortheorem} for all allowable choices of $m_1, m_2, m_3, m_4$,  $\sum_{i = 1}^4 (m_i - \alphavector \cdot v_{m_i}) > 1$. As there is a constant bound on the possible values for $m_1, ..., m_4$ for $d$ fixed, we have that there is some $\epsilon$ depending on $d$ so that 
\[\Pr(Y = 0) \leq \exp(n^{1 + \epsilon}).\]
\end{proof}

\begin{proof}[Proof of dense side of Theorem \ref{maintheorem}]
Set $d \geq 1$. By Lemma \ref{ConcentrationInequality} there is $\epsilon > 0$ so that for any embedding $\pi$ of the vertices of $X \sim X(n; n^{-\alphavector})$ into $\R^{2d}$, the probability that $X$ has no Radon matches with $\pi$ is at most $\exp(-n^{1 + \epsilon})$. Taking a union bound over all nonequivalent embeddings of the vertices from Lemma \ref{NumberOfEmbeddings} we have that the probability that there is an embedding of the vertices with no Radon match is at most 
\[n^{2d(2d + 1) n} e^{-n^{1 + \epsilon}}= o(1).\]
So with high probability every embedding of $X$ into $\R^{2d}$ has a Radon match, therefore $X$ is not linearly embeddable in $\R^{2d}$.
\end{proof}

\section{Concluding remarks}
Theorem \ref{maintheorem} establishes a threshold result for linear embeddings in even dimensions. Two important special cases are for the Linial--Meshulam--Wallach model where we see that (up to the right exponent) $p = n^{-d}$ is the threshold for $Y \sim Y_d(n, p)$ to be embeddable in $\R^{2d}$ and for the clique complex model where we see that $p = n^{-2/(d + 1)}$ is the threshold for embeddability of $X \sim X(n, p)$ in $\R^{2d}$. Up to the exponent, this matches the threshold for $X$ to be at least $(d + 1)$-dimensional even though we found obstructions to embeddability already in the $d$-skeleton.

PL-embeddings of random complexes have also been studied in the past, and together with a result of Wagner, Theorem \ref{maintheorem} gives a new, probabilistic proof of the known result that PL-embeddablility is a strictly weaker notion than linearly-embeddability for complexes of dimension larger than 1. Let $Y_d(n, p)$ denote the $d$-dimensional Linial--Meshulam--Wallach model. Wagner's result on PL-embeddability is the following
\begin{theorem}[Theorem 2 of \cite{Wagner}]
The threshold for PL-embeddability of $Y \sim Y_d(n, p)$ into $\R^{2d}$ is at $p = \Theta(1/n)$.
\end{theorem}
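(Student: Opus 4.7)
The plan is to handle the two sides of the threshold separately. Both sides parallel the proof of Theorem \ref{maintheorem}, but with the linear-embedding machinery replaced by PL-embedding analogues: Horvati\'c's $d$-collapsibility criterion on the sparse side and a van Kampen-type face-count upper bound on the dense side.

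For the sparse side, take $p = c/n$ with $c$ a small absolute constant depending only on $d$. By Horvati\'c's theorem \cite{Horvatic} it suffices to show that $Y \sim Y_d(n, c/n)$ is $d$-collapsible with high probability. Because the full $(d-1)$-skeleton is always present, $Y$ is $d$-collapsible exactly when the $(d+1)$-uniform hypergraph $\mathcal{H}$ on vertex set $\binom{[n]}{d}$ whose hyperedges are the $d$-faces of $Y$ has no 2-core; at each collapse step one peels off a $(d-1)$-face of $\mathcal{H}$-degree at most $1$ together with its unique containing $d$-face. Here $\mathcal{H}$ is a random $(d+1)$-uniform hypergraph in which each vertex has $\Bin(n-d, c/n)$ degree of mean asymptotically $c$. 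A first-moment argument closely analogous to Lemmas \ref{LargeCoreLemma} and \ref{SmallCoreLemma} — now applied to the size of a putative minimal 2-core of $\mathcal{H}$ rather than of the simplicial complex directly — shows that for $c$ below some threshold $c_d^*$ depending only on $d$, $\mathcal{H}$ has no 2-core with high probability.

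For the dense side, take $p = C/n$ with $C$ large. One invokes the known upper bound that any $d$-complex on $[n]$ that PL-embeds in $\R^{2d}$ has at most $\kappa_d \binom{n}{d}$ top faces for some absolute constant $\kappa_d$ (the planarity bound $3n - 6$ when $d=1$; for general $d$ this follows from the classical van Kampen--Flores obstruction, see e.g.\ the bounds of Kalai and Sarkaria). Meanwhile $f_d(Y)$ is a sum of $\binom{n}{d+1}$ independent Bernoullis with mean $\binom{n}{d+1}\cdot C/n \sim (C/(d+1))\binom{n}{d}$, so a Chernoff bound gives sharp concentration of $f_d(Y)$ around its mean. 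Choosing $C > (d+1)\kappa_d$ therefore forces $f_d(Y) > \kappa_d \binom{n}{d}$ with high probability, which already rules out even topological embeddability, and in particular PL-embeddability, in $\R^{2d}$.

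The main obstacle is the face-count upper bound used on the dense side: this is an honest geometric/topological input rather than a random-complex calculation, and the tightest known forms require some care. However, for the mere existence of a $\Theta(1/n)$ threshold only the $O(n^d)$ order of the bound is needed, and once this is cited as a black box both sides of the threshold reduce to variants of arguments already carried out in Section 3, with the hypergraph 2-core first-moment estimate on the sparse side being the only calculation requiring appreciable new work.
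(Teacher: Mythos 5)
This result is quoted from Wagner's paper and not reproved here, so there is no internal proof to compare against. Your sparse-side reduction to hypergraph 2-cores is set up correctly: with the full $(d-1)$-skeleton present, $d$-collapsibility of $Y$ is precisely the absence of a 2-core in the $(d+1)$-uniform hypergraph $\mathcal{H}$ whose vertices are the $(d-1)$-faces. But a first-moment argument ``closely analogous to Lemmas \ref{LargeCoreLemma} and \ref{SmallCoreLemma}'' does not close at $p = c/n$. Those lemmas require $d+1 - \alphavector \cdot v_{d+1} < 1$, which in the Linial--Meshulam case is $p \ll n^{-d}$, a regime with $o(n)$ $d$-faces in which any bounded-size obstruction contributes $n^{-\delta}$ to the first moment. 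At $p = c/n$ the complex has $\Theta(n^d)$ $d$-faces, weak connectivity is global rather than confined to bounded pieces, and the per-step drift of $\beta$ in the proof of Lemma \ref{LargeCoreLemma} is no longer uniformly negative. The genuine sparse-side input is the theorem of Aronshtam, Linial, {\L}uczak, and Meshulam that $Y_d(n, c/n)$ is $d$-collapsible with high probability for $c$ below an explicit constant $c_d^*$; its proof is a branching-process / peeling analysis of the 2-core, not a first moment.

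The dense side has a more fundamental error. The face-count input you propose --- that any $d$-complex PL-embeddable in $\R^{2d}$ has at most $\kappa_d \binom{n}{d}$ top faces --- is the Gr\"unbaum--Kalai--Sarkaria conjecture, which is a theorem only for $d=1$. It does not ``follow from the classical van Kampen--Flores obstruction'': excluding the $d$-skeleton of the $(2d+2)$-simplex as a subcomplex (or as a subdivision) is a Tur\'an-type condition on a $(d+1)$-uniform hypergraph whose Tur\'an density is a positive constant, so this forbids only a constant fraction of the $\binom{n}{d+1}$ potential $d$-faces, nowhere near an $O(n^d)$ bound. Wagner's actual dense-side argument is structural and topological rather than extremal: he uses Gromov's topological overlap / expansion machinery to show that for $p \geq C/n$ the random complex with high probability contains the van Kampen--Flores complex as a topological minor, and hence fails to embed in $\R^{2d}$ even topologically. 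An argument of that nature is required; the face-count black box you invoke is not available for $d \geq 2$.
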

From this and Theorem \ref{maintheorem} the following is immediate.
\begin{theorem}
For $1 < \alpha < d$ with high probability $Y \sim Y_d(n, n^{-\alpha})$ is PL-embeddable in $\R^{2d}$, but not linearly embeddable in $\R^{2d}$.
\end{theorem}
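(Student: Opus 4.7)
The plan is a direct combination of Wagner's theorem with Theorem \ref{maintheorem}, applied to the correct specialization of the multiparameter model. The Linial--Meshulam--Wallach model $Y_d(n,p)$ arises from $X(n; n^{-\alphavector})$ by setting $\alpha_i = 0$ for $1 \leq i \leq d - 1$, $\alpha_d = \alpha$, and $\alpha_i = \infty$ for $i > d$, so that $p = n^{-\alpha}$. Under this specialization, the only nonzero contribution to $\alphavector \cdot v_{d+1} = \sum_{i=1}^{d} \alpha_i \binom{d+1}{i+1}$ comes from $i = d$, giving $\alphavector \cdot v_{d+1} = \alpha \binom{d+1}{d+1} = \alpha$. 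Hence the threshold quantity in Theorem \ref{maintheorem} equals $d + 1 - \alpha$.

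The assumption $\alpha < d$ gives $d + 1 - \alpha > 1$, so the dense side of Theorem \ref{maintheorem} applies and with high probability $Y \sim Y_d(n, n^{-\alpha})$ does not linearly embed in $\R^{2d}$. The assumption $\alpha > 1$ gives $np = n^{1-\alpha} \to 0$, placing $p$ strictly below the $\Theta(1/n)$ threshold identified by Wagner, so with high probability $Y$ is PL-embeddable in $\R^{2d}$. A union bound over the complements of these two high-probability events then yields that with high probability $Y$ is simultaneously PL-embeddable and not linearly embeddable in $\R^{2d}$.

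Because both ingredients are already established, there is no genuine technical obstacle. The only matter requiring attention is the bookkeeping that identifies the single-parameter exponent $\alpha$ in $Y_d(n, n^{-\alpha})$ with the inner product $\alphavector \cdot v_{d+1}$ in the multiparameter framework, and confirming that the strict inequalities $1 < \alpha$ and $\alpha < d$ translate to strict inequalities on the correct sides of both threshold theorems.
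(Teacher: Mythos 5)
Your proof is correct and follows exactly the route the paper intends (the paper treats the result as immediate from Wagner's theorem and Theorem \ref{maintheorem}, leaving the specialization implicit). You supply the right bookkeeping: for the Linial--Meshulam--Wallach specialization one has $\alphavector \cdot v_{d+1} = \alpha$, so $\alpha < d$ places $Y$ on the dense side of Theorem \ref{maintheorem} (no linear embedding in $\R^{2d}$), while $\alpha > 1$ places $p = n^{-\alpha}$ below Wagner's $\Theta(1/n)$ threshold (PL-embeddable), and a union bound over the two negligible failure events finishes the argument.
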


Perhaps the most natural open question is to establish linear embedding thresholds for odd dimensions. An earlier version of this paper claimed such a threshold, however upon writing this revision an error was found for the proof in the odd dimensional case. For showing nonembeddability in $\R^{2d - 1}$ for sparser complexes than those that fail to embed in $\R^{2d}$, some new ideas would be required. To explain why we'll examine the case of embedding the random clique complex in $\R^3$. 

For $2/3 < \alpha < 1$ we know that $X(n, n^{-\alpha})$ embeds in $\R^4$ but not in $\R^2$. If we take $\alpha = 1 - \delta$ for some small $\delta$ and try to use the same proof based on Radon matches to rule out embeddability into $\R^3$, we could still show that on average a fixed placement of the vertices in $\R^3$ will have $\Theta(n^{5-4\alpha})$ Radon matches, so that part of the proof would still be consistent with the even dimensional case. The problem, however, is that we cannot get an upper bound on the probability of no Radon matches of $\exp(-n^{1 + \epsilon})$ in this case, and this was used in the even dimensional case to take a union bound over the approximately $n^n$ order types. The reason we can't prove such a bound is that if our random complex happens to have no triangles at all for $\alpha = 1 - \delta$ then the complex is 1-dimensional and so it will embed in $\R^3$ generically. The average number of triangles is $n^{3 - 3 \alpha} = n^{-3 \delta}$, so the probability there are no triangles should be roughly $\exp(-n^{3\delta})$ (there would be some things to check here since the triangles aren't included independently, but this is just a sketch of the idea). So there is a lower bound that exceed $\exp(-n^{1 + \epsilon})$ until $\delta > 1/3$, but at that point we are at the nonembeddability threshold for $\R^4$ anyway.

Other remaining open problems include establishing sharp threshold results for linear embeddability in even dimensions; the main result here establishes thresholds for linear embeddability only up to the right exponents. Additionally, now that we know that the Linial--Meshulam--Wallach model has different thresholds for PL-embeddability and for linear embeddability, it would be interesting then to determine PL-embeddability thresholds for entire the multiparameter model. 

\section*{Acknowledgments}
The author thanks Florian Frick for discussions leading to the central question answered in this paper and for helpful comments on an early draft and Boris Bukh for pointing out an error in a previous version.
\bibliography{ResearchBibliography}
\bibliographystyle{amsplain}
\end{document}